\numberwithin{equation}{section}
\theoremstyle{plain}
\newtheorem{theorem}[equation]{Theorem}
\newtheorem{lemma}[equation]{Lemma}
\newtheorem{proposition}[equation]{Proposition}
\theoremstyle{definition}
\newtheorem{definition}[equation]{Definition}
\newtheorem{example}[equation]{Example}
\newtheorem{remark}[equation]{Remark}
\numberwithin{equation}{section}
\newcommand{\R}{{\mathbb R}}
\newcommand{\N}{{\mathbb N}}
\newcommand{\Om}{\Omega}
\providecommand{\vint}[1]{\mathchoice
          {\mathop{\vrule width 5pt height 3 pt depth -2.5pt
                  \kern -9pt \kern 1pt\intop}\nolimits_{\kern -5pt{#1}}}
          {\mathop{\vrule width 5pt height 3 pt depth -2.6pt
                  \kern -6pt \intop}\nolimits_{\kern -3pt{#1}}}
          {\mathop{\vrule width 5pt height 3 pt depth -2.6pt
                  \kern -6pt \intop}\nolimits_{\kern -3pt{#1}}}
          {\mathop{\vrule width 5pt height 3 pt depth -2.6pt
                  \kern -6pt \intop}\nolimits_{\kern -3pt{#1}}}}
\newcommand{\eps}{\varepsilon}
\newcommand{\loc}{\mathrm{loc}}
\newcommand{\BV}{\mathrm{BV}}
\newcommand{\liploc}{\mathrm{Lip}_{\mathrm{loc}}}
\newcommand{\ch}{\text{\raise 1.3pt \hbox{$\chi$}\kern-0.2pt}}
\DeclareMathOperator{\Mod}{Mod}
\DeclareMathOperator{\capa}{Cap}
\DeclareMathOperator{\dist}{dist}
\DeclareMathOperator{\diam}{diam}
\DeclareMathOperator{\Lip}{Lip}
\DeclareMathOperator{\inte}{int}
\begin{document}
\title{On rough traces of BV functions
\footnote{{\bf 2010 Mathematics Subject Classification}: 30L99, 26B30, 46E35
\hfill \break {\it Keywords\,}: boundary trace, rough trace, function of bounded variation,
metric measure space,  integrability, extension
}}
\author{Panu Lahti}
\maketitle

\begin{abstract}
In metric measure spaces, we study boundary traces of BV functions in domains equipped with a doubling measure
and supporting a Poincar\'e inequality, but possibly having a very large and irregular boundary.
We show that the trace exists in the ordinary sense in a certain part of the boundary,
and that this part is sufficient to determine the integrability of the rough trace,
as well as the possibility of zero extending the function to the whole space as
a BV function.
\end{abstract}

\section{Introduction}

Boundary traces of Sobolev and BV (bounded variation) functions are a relevant concept for example in the study of Dirichlet boundary
value problems. Classical treatments of boundary traces,
or traces for short, of BV functions in Euclidean spaces can be found e.g. in
\cite[Chapter 3]{AFP} and \cite[Chapter 5]{EvGa}.
Traditionally, one considers domains with a Lipschitz boundary.
On the other hand, traces can be studied also in more general domains, and
in abstract metric measure spaces $(X,d,\mu)$.
Usually one assumes that the measure $\mu$ satisfies a doubling property and
that the space supports a Poincar\'e inequality.
In such a setting it is natural to define the trace as follows: given a function
$u$ defined on an open set $\Om\subset X$, the number
$Tu(x)\in\R$ is the boundary trace of $u$ at $x\in\partial\Om$ if
\[
\lim_{r\to 0}\,\vint{\Om\cap B(x,r)}|u-Tu(x)|\,d\mu=0.
\]
In \cite{LaSh,MSS}, various properties of traces of BV functions in metric spaces were shown.
Despite being far more general than the classical setting of Lipschitz domains,
the theory in these papers still required rather strong regularity of $\Om$ and especially of its boundary.
If one assumes very little or no regularity of $\Om$, the ordinary boundary trace might not exist
on the entire boundary,
but the following \emph{rough trace} always exists:
\begin{align*}
T_*u(x):=\sup\left\{t\in\R\colon \theta^*(\{u>t\},x)>0\right\},\quad x\in\partial\Om.
\end{align*}
Here we denote by $\theta^*$ the upper measure density; see Section \ref{sec:definitions}
for definitions.
Rough traces have been studied by Maz'ya \cite[Section 9.5]{Maz} in Euclidean spaces,
and by Buffa--Miranda \cite{BuMi} in metric spaces.

In the current paper, we are interested in considering open sets $\Om\subset X$
that have some regularity but significantly less than those considered \cite{LaSh,MSS}.
This means in particular that the boundary $\partial\Om$ can be very large and that
the (ordinary) trace $Tu$ might only exist on a
part of it. Nonetheless, this part turns out to be enough to largely determine the behavior
of the rough trace $T_*u$ on the entire boundary.
We will assume that, in a suitable sense, $\Om$ is equipped with a doubling measure
and supports a Poincar\'e inequality locally near its boundary.
We abbreviate this by saying that $\Om$ is PLB, see Definition \ref{def:local conditions on boundary}.

Our main result is the following.
We define $\Om_{\beta}$ to be the set where the lower measure density of $\Om$ is at least a constant
$\beta>0$, and $\Sigma_{\beta}\Om:=\Om_{\beta}\cap (X\setminus \Om)_{\beta}$.

\begin{theorem}\label{thm:main theorem}
Suppose $\Om\subset X$ is PLB and let $u\in\BV(\Om)$. Then
\begin{enumerate}[(1)]
\item the trace $Tu(x)$ exists at
$\mathcal H$-a.e. $x\in\partial\Om\cap \Om_{\beta}$;
\item the integrals of the boundary traces satisfy
\[
\int_{\partial\Om\cap \{\theta^*(\Om,\cdot)>0\}}|T_{*}u|\,d\mathcal H=\int_{\partial\Om\cap \Om_{\beta}}|Tu|\,d\mathcal H;\ \ \textrm{and}
\]
\item the zero extension of $u$ from $\Om$ to $X$ is in $\BV(X)$ if and only if
\[
\int_{\Sigma_{\beta}\Om}|Tu|\,d\mathcal H<\infty.
\]
\end{enumerate}
\end{theorem}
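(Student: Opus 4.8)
The strategy is to prove the three assertions first for $u=\mathbf 1_E$, where $E\subset\Om$ has finite perimeter in $\Om$, and then to pass to a general $u\in\BV(\Om)$ via the coarea formula applied to the superlevel sets $E_t:=\{u>t\}$, which have finite perimeter in $\Om$ for a.e.\ $t\in\R$. Two elementary observations are used repeatedly. First, $\Sigma_{\beta}\Om\subset\bdy\cap\Om_{\beta}$, so once part~(1) is established the integrand in part~(3) is defined $\mathcal H$-a.e.\ and the statement is meaningful. Second, at any $x\in\bdy\cap\Om_{\beta}$ at which $Tu(x)$ exists we have $T_*u(x)=Tu(x)$: for $t<Tu(x)$, Chebyshev's inequality gives $\mu(\{u\le t\}\cap\Om\cap B(x,r))=o(\mu(\Om\cap B(x,r)))$, so $\theta^*(\{u>t\},x)\ge\tfrac12\theta_*(\Om,x)>0$ and hence $t\le T_*u(x)$; for $t>Tu(x)$ the same estimate forces $\theta^*(\{u>t\},x)=0$, so $t\ge T_*u(x)$.

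Part~(1) is a Lebesgue-point statement. For $x\in\bdy\cap\Om_{\beta}$ the PLB property supplies a doubling measure and a $(1,1)$-Poincar\'e inequality on $\Om$ in a fixed neighbourhood of $x$, and the membership $x\in\Om_{\beta}$ yields $\mu(B(x,r))\le\beta^{-1}\mu(\Om\cap B(x,r))$ for all small $r$. Telescoping the Poincar\'e inequality over the balls $B(x,2^{-j})$ shows that the averages $\vint{\Om\cap B(x,r)}u\,d\mu$ converge and that $\vint{\Om\cap B(x,r)}|u-Tu(x)|\,d\mu\to0$, except where a discrete Riesz-type potential of $|Du|$ is infinite, and that exceptional set is $\mathcal H$-negligible. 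Applying this to $\mathbf 1_E$ shows that, wherever the trace of a BV characteristic function exists on $\bdy\cap\Om_{\beta}$, it equals $0$ or $1$; it equals $1$ precisely when $\mu((\Om\setminus E)\cap B(x,r))=o(\mu(\Om\cap B(x,r)))$, in which case $\theta^*(E,x)=\theta^*(\Om,x)\ge\beta$, and it equals $0$ precisely when $\theta^*(E,x)=0$. In particular, for $E$ of finite perimeter in $\Om$ and $\mathcal H$-a.e.\ $x\in\bdy\cap\Om_{\beta}$, the conditions $\theta^*(E,x)>0$, $T\mathbf 1_E(x)=1$, and $\theta^*(E,x)=\theta^*(\Om,x)$ are equivalent.

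The core of the proof, and the step I expect to be the main obstacle, is a measure-density estimate near $\bdy$: for $E\subset\Om$ of finite perimeter in $\Om$, at $\mathcal H$-a.e.\ $x\in\bdy$ with $\theta^*(E,x)>0$ one has $\theta_*(\Om,x)\ge\beta$, and if in addition $\theta^*(X\setminus E,x)>0$ then also $\theta_*(X\setminus\Om,x)\ge\beta$. I would argue by contradiction: if on a set of positive $\mathcal H$-measure the density of $E$ were above a fixed level along some radii and below $\beta$ along smaller ones, then at intermediate scales $B(x,s)$ the set $E$ would occupy a fraction of $B(x,s)$ bounded away from both $0$ and $1$ (relative to $\Om$), so the relative isoperimetric inequality furnished by PLB would produce a definite amount of $|D\mathbf 1_E|$ in each corresponding annulus, and a Vitali covering of the bad set would force $P(E,\Om)=\infty$. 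Applying this with $E=\Om\cap B(z,\varrho)$, for balls $B(z,\varrho)$ ranging over a countable dense family (chosen so that $E$ has finite perimeter in $\Om$, and noting $\theta^*(E,x)=\theta^*(\Om,x)$ and $\theta^*(X\setminus E,x)\ge\theta^*(X\setminus\Om,x)$ for $x\in B(z,\varrho)$), we obtain $\bdy\cap\{\theta^*(\Om,\cdot)>0\}=\bdy\cap\Om_{\beta}$ and $\bdy\cap\{\theta^*(\Om,\cdot)>0\}\cap\{\theta^*(X\setminus\Om,\cdot)>0\}=\Sigma_{\beta}\Om$, both up to $\mathcal H$-null sets. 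Combined with the previous paragraph, for $E$ of finite perimeter in $\Om$ its measure-theoretic boundary $\partial^*E$ (taken in $X$) satisfies $\partial^*E\cap\bdy=\Sigma_{\beta}\Om\cap\{\theta^*(E,\cdot)>0\}=\Sigma_{\beta}\Om\cap\{T\mathbf 1_E=1\}$ up to $\mathcal H$-null sets, does not meet $X\setminus\cOm$, and has $\mathcal H(\partial^*E\cap\Om)$ comparable to $P(E,\Om)$.

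It remains to integrate in $t$. Since $T_*\mathbf 1_{E_t}=\mathbf 1_{\{\theta^*(E_t,\cdot)>0\}}$ on $\bdy\cap\{\theta^*(\Om,\cdot)>0\}$, a layer-cake decomposition gives $|T_*u(x)|=\int_0^\infty T_*\mathbf 1_{E_t}(x)\,dt+\int_{-\infty}^0\bigl(1-T_*\mathbf 1_{E_t}(x)\bigr)\,dt$ pointwise on that set; applying Tonelli together with the set identities above and with $T\mathbf 1_{E_t}=\mathbf 1_{\{Tu>t\}}$, valid $\mathcal H$-a.e.\ on $\bdy\cap\Om_{\beta}$ for a.e.\ $t$, turns the left side into $\int_{\bdy\cap\Om_{\beta}}|Tu|\,d\mathcal H$, which gives part~(2); the only care needed is to discard the at most countably many $t$ for which $\{Tu=t\}$ carries positive $\mathcal H$-measure on $\bdy\cap\Om_{\beta}$. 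For part~(3), $\bar u\in\BV(X)$ is equivalent — as $\bar u\in L^1(X)$ — to $\int_\R P(\{\bar u>t\},X)\,dt<\infty$, where $\{\bar u>t\}=E_t$ for $t>0$ and has complement $\Om\setminus E_t$ for $t<0$. For a.e.\ such $t$, the finiteness of $\mathcal H(\partial^*E_t\cap\Om)\approx P(E_t,\Om)$ and of $\mathcal H(\partial^*E_t\cap\bdy)=\mathcal H(\Sigma_{\beta}\Om\cap\{T\mathbf 1_{E_t}=1\})=\mathcal H(\Sigma_{\beta}\Om\cap\{Tu>t\})$, together with a Federer-type finite-perimeter criterion in $X$, shows that $\mathbf 1_{E_t}\in\BV(X)$ with $P(E_t,X)$ comparable to $P(E_t,\Om)+\mathcal H(\Sigma_{\beta}\Om\cap\{Tu>t\})$, and similarly for $\Om\setminus E_t$ when $t<0$. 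Integrating these comparisons in $t$ and using $|D\bar u|(X)=\int_\R P(\{\bar u>t\},X)\,dt$ then shows $\bar u\in\BV(X)$ if and only if $\int_0^\infty\mathcal H(\Sigma_{\beta}\Om\cap\{Tu>t\})\,dt$ and the analogous integral over $t<0$ are finite, that is, if and only if $\int_{\Sigma_{\beta}\Om}|Tu|\,d\mathcal H<\infty$.
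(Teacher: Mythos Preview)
Your core measure-density claim --- that for $E\subset\Om$ of finite perimeter in $\Om$, $\mathcal H$-a.e.\ $x\in\bdy$ with $\theta^*(E,x)>0$ satisfies $\theta_*(\Om,x)\ge\beta$, and hence $\bdy\cap\{\theta^*(\Om,\cdot)>0\}=\bdy\cap\Om_\beta$ up to an $\mathcal H$-null set --- is false, and both parts~(2) and~(3) of your argument rest on it. The paper's Example~\ref{ex:measure theoretic and strong boundary} builds a PLB planar domain $\Om$ (a stack of thin rectangles accumulating at a point $x_0$, under a Muckenhoupt-weighted measure) with $\theta^*(\Om,x_0)>0$, $\theta_*(\Om,x_0)=0$, and $\mathcal H(\{x_0\})>0$; taking $E=\Om$, so that $P(E,\Om)=0$, already refutes the claim. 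Your contradiction argument breaks because the relative isoperimetric inequality furnished by PLB is relative to $\Om$: it controls $\min\{\mu(E\cap B),\mu((\Om\setminus E)\cap B)\}$ via $P(E,B\cap\Om)$ and $\mu(\Om\cap B)$, not $\mu(B)$. When $E$ coincides with $\Om$ near $x$, one has $\mu((\Om\setminus E)\cap B)=0$ at every scale and no perimeter is produced, however much $\mu(\Om\cap B(x,r))/\mu(B(x,r))$ oscillates. You cannot manufacture perimeter of $E$ in $\Om$ from oscillation of $\Om$ itself in $X$.

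The paper's route to part~(2) is accordingly different: it does \emph{not} show the integration domains coincide. Instead it proves (Proposition~\ref{prop:weak continuity}) that at $\mathcal H$-a.e.\ $x\in\bdy\setminus\Om_\beta$ with $T_*u(x)>-\infty$, every ball around $x$ contains a set of \emph{infinite} $\mathcal H$-measure on which $T_\beta u\ge\min\{T_*u(x)-\eps,1/\eps\}$. The mechanism is the converse Federer characterization (Theorem~\ref{thm:new Federer}): finiteness of $\mathcal H(\Sigma_\beta\{u>q\}\cap B)$ would force $P(\{u>q\},B)<\infty$ in $X$, hence $\partial^*\{u>q\}\setminus\Sigma_\beta\{u>q\}$ is $\mathcal H$-null in $B$, hence $\theta^*(\{u>q\},x)=0$ since $x\notin\Om_\beta$, contradicting $q<T_*u(x)$. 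Equality in~(2) then follows by dichotomy: either $T_*|u|>0$ somewhere on $\bdy\cap\{\theta^*(\Om,\cdot)>0\}\setminus\Om_\beta$ (outside a null set), making both sides $+\infty$; or $T_*u=0$ $\mathcal H$-a.e.\ there and both sides reduce to the integral over $\bdy\cap\Om_\beta$. Part~(3) needs the same repair: rather than asserting $\partial^*\Om=\Sigma_\beta\Om$ up to null sets, the paper uses the zero-extension to $\overline{W}$ from the proof of part~(1) to show that $\Sigma_\beta\{u>t\}\cap\bdy\cap\Om_\beta\setminus(X\setminus\Om)_\beta$ is $\mathcal H$-null for a.e.\ $t$.
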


In essence, the theorem says that the trace exists in the ordinary sense on a part of the boundary,
namely $\partial\Om\cap\Om_{\beta}$, and that
this is enough to determine the integrability of the rough trace,
as well as the zero extension property of the function.
We will prove the three different parts of the theorem
in Sections \ref{sec:boundary traces}, \ref{sec:integrability}, and \ref{sec:zero extension}.
In Section \ref{sec:zero extension} we also examine the possibility of zero extending $u$ as a $\BV$ function without
adding any total variation.

\section{Preliminaries}\label{sec:definitions}

In this section we introduce the basic notation, definitions,
and assumptions that are employed in the paper.

Throughout this paper, $(X,d,\mu)$ is a complete metric space that is equip\-ped
with a metric $d$ and a Borel regular outer measure $\mu$ satisfying
a doubling property, meaning that
there exists a constant $C_d\ge 1$ such that
\[
0<\mu(B(x,2r))\le C_d\mu(B(x,r))<\infty
\]
for every open ball $B(x,r):=\{y\in X\colon d(y,x)<r\}$, with $x\in X$ and $r>0$.
We assume that $X$ consists of at least $2$ points.
When a property holds outside of a set of $\mu$-measure zero, we say that it holds at
$\mu$-a.e. point.

All functions defined on $X$ or its subsets will take values in $[-\infty,\infty]$.
As a complete metric space equipped with a doubling measure, $X$ is proper,
that is, closed and bounded sets are compact.
A function $u$ defined in an open set $\Omega\subset X$
is said to be in the class $L^1_{\loc}(\Omega)$ if it is in $L^1(\Om')$ for
every open $\Omega'\Subset\Omega$.
Here $\Omega'\Subset\Omega$ means that $\overline{\Omega'}$ is a
compact subset of $\Omega$.
Other local spaces of functions are defined analogously.

By a curve we mean a rectifiable continuous mapping from a compact interval of the real line into $X$.
The length of a curve $\gamma$
is denoted by $\ell_{\gamma}$. We will assume every curve to be parametrized
by arc-length (see e.g. \cite[Theorem~3.2]{Hj}), so we have $\gamma\colon [0,\ell_{\gamma}]\to X$.
A nonnegative Borel function $g$ on $X$ is an upper gradient 
of a function $u$
on $X$ if for all nonconstant curves $\gamma$, we have
\begin{equation}\label{eq:definition of upper gradient}
	|u(x)-u(y)|\le \int_{\gamma} g\,ds:=\int_0^{\ell_{\gamma}} g(\gamma(s))\,ds,
\end{equation}
where $x$ and $y$ are the end points of $\gamma$.
We interpret $|u(x)-u(y)|=\infty$ whenever  
at least one of $|u(x)|$, $|u(y)|$ is infinite.
Upper gradients were originally introduced in \cite{HK}.

We say that a domain $\Omega\subset X$ is $M$-uniform, with constant $M\ge 1$,
if for every $x,y\in\Omega$ there exists a curve $\gamma\colon [0,\ell_{\gamma}]\to \Omega$ with 
$\ell_{\gamma}\le Md(x,y)$, $\gamma(0)=x$, $\gamma(\ell_{\gamma})=y$, and
such that for all $t\in [0,\ell_{\gamma}]$ we have
\begin{equation}\label{eq:uniformity}
	\dist(\gamma(t),X\setminus\Omega)\ge M^{-1}\min\{t,\ell_{\gamma}-t\}.
\end{equation}
We say that a domain is uniform if it is $M$-uniform for some $1\le M<\infty$.

Let $1\le p <\infty$. (We will work almost exclusively with $p=1$.)
The $p$-modulus of a family of curves $\Gamma$ is defined by
\[
\Mod_{p}(\Gamma):=\inf\int_{X}\rho^p\, d\mu,
\]
where the infimum is taken over all nonnegative Borel functions $\rho$
such that $\int_{\gamma}\rho\,ds\ge 1$ for every curve $\gamma\in\Gamma$.
A property is said to hold for $p$-almost every curve
if it fails only for a curve family with zero $p$-modulus. 
If $g$ is a nonnegative $\mu$-measurable function on $X$
and (\ref{eq:definition of upper gradient}) holds for $p$-almost every curve,
then we say that $g$ is a $p$-weak upper gradient of $u$. 
By only considering curves $\gamma$ in a set $A\subset X$,
we can talk about a function $g$ being a ($p$-weak) upper gradient of $u$ in $A$.

Given an open set $\Om\subset X$, we let
\[
\Vert u\Vert_{N^{1,p}(\Om)}:=\left(\int_\Om |u|^p\,d\mu+\inf \int_\Om g^p\,d\mu \right)^{1/p},
\]
where the infimum is taken over all $p$-weak upper gradients $g$ of $u$ in $\Om$.
Then we define the Newton-Sobolev space
\[
N^{1,p}(\Om):=\{u\colon \|u\|_{N^{1,p}(\Om)}<\infty\},
\]
which was first introduced in \cite{S}.
For any $u\in N^{1,p}(\Om)$ the quantity $\Vert u\Vert_{N^{1,p}(\Om)}$
agrees with the classical Sobolev norm, see e.g. \cite[Corollary A.4]{BB}.
It is known that for every $u\in N_{\loc}^{1,p}(\Om)$ there exists a minimal $p$-weak
upper gradient of $u$ in $\Om$, which we always denote by $g_{u}$, satisfying $g_{u}\le g$ 
$\mu$-almost everywhere in $\Om$ for every $p$-weak upper gradient $g\in L_{\loc}^{p}(\Om)$
of $u$ in $\Om$, see \cite[Theorem 2.25]{BB}.

For any open sets $\Om,\Om_0\subset X$, the space of Newton-Sobolev functions
with zero boundary values is defined as
\begin{equation}\label{eq:definition of N011}
	N_0^{1,p}(\Om,\Om_0):=\{u|_{\Om\cap \Om_0}\colon \,u\in N^{1,p}(\Om_0)\textrm{ and }u=0\textrm { in }\Om_0\setminus \Om\}.
\end{equation}
This space is a subspace of $N^{1,p}(\Om\cap \Om_0)$, and it can
be understood to be a subspace of $N^{1,p}(\Om_0)$ as well.
We let $N_0^{1,p}(\Om):=N_0^{1,p}(\Om,X)$.

The $p$-capacity of a set $A\subset X$ is defined by
\begin{equation}\label{eq:definition of capacity}
\capa_p(A):=\inf \Vert u\Vert_{N^{1,p}(X)}^p,
\end{equation}
where the infimum is taken over all functions $u\in N^{1,p}(X)$ such that $u\ge 1$ in $A$.

For any set $A\subset X$ and $0<R<\infty$, the Hausdorff content
of codimension one is defined by
\[
\mathcal{H}_{R}(A):=\inf\left\{ \sum_{j=1}^{\infty}
\frac{\mu(B(x_{j},r_{j}))}{r_{j}}:\,A\subset\bigcup_{j=1}^{\infty}B(x_{j},r_{j}),\,r_{j}\le R\right\}.
\]
We also allow finite coverings by interpreting $\mu(B(x,0))/0=0$.
The codimension one Hausdorff measure of $A\subset X$ is then defined by
\[
\mathcal{H}(A):=\lim_{R\rightarrow 0}\mathcal{H}_{R}(A).
\]
We also define ``centered'' versions $\widehat{\mathcal H}_R$ and $\widehat{\mathcal H}$
in the same way, but with the additional requirement that $x_j\in A$. By the doubling property of $\mu$,
these are comparable to $\mathcal{H}_{R}$ and $\mathcal{H}$.

By \cite[Theorem 4.3, Theorem 5.1]{HaKi} we know that for every $A\subset X$,
\begin{equation}\label{eq:null sets of Hausdorff measure and capacity}
	\capa_1(A)=0\quad \textrm{if and only if}\quad \mathcal H(A)=0.
\end{equation}

We will assume throughout the paper that $X$ supports a $(1,1)$-Poincar\'e inequality,
meaning that there exist constants $C_P>0$ and $\lambda \ge 1$ such that for every
ball $B(x,r)$, every $u\in L^1_{\loc}(X)$,
and every upper gradient $g$ of $u$,
we have
\[
	\vint{B(x,r)}|u-u_{B(x,r)}|\, d\mu 
	\le C_P r\vint{B(x,\lambda r)}g\,d\mu,
\]
where 
\[
u_{B(x,r)}:=\vint{B(x,r)}u\,d\mu :=\frac 1{\mu(B(x,r))}\int_{B(x,r)}u\,d\mu.
\]

Next we present the definition and basic properties of functions
of bounded variation on metric spaces, following Miranda Jr.
\cite{M}.
Given an open set $\Om\subset X$ and a function $u\in L^1_{\loc}(\Om)$,
we define the total variation of $u$ in $\Om$ by
\begin{equation}\label{eq:total variation}
\|Du\|(\Om):=\inf\left\{\liminf_{i\to\infty}\int_\Om g_{u_i}\,d\mu\colon \, u_i\in N^{1,1}_{\loc}(\Om),\, u_i\to u\textrm{ in } L^1_{\loc}(\Om)\right\},
\end{equation}
where each $g_{u_i}$ is the minimal $1$-weak upper gradient of $u_i$ in $\Om$.
We say that a function $u\in L^1(\Om)$ is of bounded variation, 
and denote $u\in\BV(\Om)$, if $\|Du\|(\Om)<\infty$.
For an arbitrary set $A\subset X$, we define
\[
\|Du\|(A):=\inf\{\|Du\|(W)\colon A\subset W,\,W\subset X
\text{ is open}\}.
\]
In \cite{M}, local Lipschitz constants were used in place of upper gradients, but the theory
can be developed similarly with either definition. It
is sometimes also required that $u_i\in \liploc(\Om)$ instead of $u_i\in N^{1,1}_{\loc}(\Om)$,
but for us this does not make a difference, since $\liploc(\Om)$ is dense in $N^{1,1}_{\loc}(\Om)$,
see \cite[Theorem 4.57]{BB}.

If $u\in L^1_{\loc}(\Om)$ and $\Vert Du\Vert(\Omega)<\infty$,
then $\|Du\|(\cdot)$ is
a Borel regular outer measure on $\Omega$ by \cite[Theorem 3.4]{M}.
A $\mu$-measurable set $E\subset X$ is said to be of finite perimeter if $\|D\ch_E\|(X)<\infty$, where $\ch_E$ is the characteristic function of $E$.
The perimeter of $E$ in $\Omega$ is also denoted by
\[
P(E,\Omega):=\|D\ch_E\|(\Omega).
\]

We define the lower and upper densities of a set $E\subset X$ at a point $x\in X$ as follows:
\[
\theta_*(E,x):=\liminf_{r\to 0}\frac{\mu(B(x,r)\cap E)}{\mu(B(x,r))}
\quad\textrm{and}\quad 
\theta^*(E,x):=\limsup_{r\to 0}\frac{\mu(B(x,r)\cap E)}{\mu(B(x,r))}.
\]
The measure-theoretic interior of a set $E\subset X$ is defined by
\[
	I_E:=
	\left\{x\in X\colon\theta^*(X\setminus E,x)=0\right\},
\]
and the measure-theoretic exterior by
\[
	O_E:=
	\left\{x\in X\colon\theta^*(E,x)=0\right\}.
\]
The measure-theoretic boundary is defined as the set of points
$x\in X$
at which both $E$ and its complement have strictly positive upper density:
\[
\partial^{*}E:=\left\{x\in X\colon \theta^*(E,x)>0
\textrm{ and }
\theta^*(X\setminus E,x)>0\right\}.
\]
Note that the space $X$ is always partitioned into the disjoint sets
$I_E$, $O_E$, and $\partial^*E$.
We also let
\begin{equation}\label{eq:definition of Omega beta}
E_{b}:=\{x\in X\colon \theta_{*}(E,x)\ge b\},\quad b> 0.
\end{equation}
The strong boundary $\Sigma_{b}E$, for $0<b\le 1/2$,
is defined as $\Sigma_{b}E:=E_{b}\cap (X\setminus E)_{b}$.

For an open set $\Omega\subset X$ and a $\mu$-measurable set $E\subset X$ with $P(E,\Omega)<\infty$, we know that
\begin{equation}\label{eq:measure theoretic and strong boundary}
	\mathcal H((\partial^*E\setminus \Sigma_{\gamma}E)\cap \Om)=0
\end{equation}
for some number $\gamma=\gamma(C_d,C_P,\lambda)>0$, see \cite[Theorem 5.4]{A1}.
We also know that for any Borel set $A\subset\Omega$,
\begin{equation}\label{eq:def of theta}
	P(E,A)=\int_{\Sigma_{\gamma}E\cap A}\theta_E\,d\mathcal H,
\end{equation}
where
$\theta_E\colon \Om\to [\alpha,C_d]$ with $\alpha=\alpha(C_d,C_P,\lambda)>0$, see \cite[Theorem 5.3]{A1} 
and \cite[Theorem 4.6]{AMP}.
In particular, $P(E,\Om)<\infty$ implies that $\mathcal H(\partial^*E\cap\Om)<\infty$.
Federer's characterization of sets of finite perimeter states that the converse is also true;
more precisely, if $E\subset X$ is a $\mu$-measurable set such that $\mathcal H(\partial^*E\cap \Om)<\infty$,
then $P(E,\Om)<\infty$,
see \cite[Theorem 1.1]{L-Fedchar}.
See also Federer \cite[Section 4.5.11]{Fed} for the original Euclidean result.

The strong boundary can also be used to characterize sets of finite perimeter, as follows.

\begin{theorem}[{\cite[Theorem 1.1]{L-newFed}}]\label{thm:new Federer}
	Let $\Om\subset X$ be an open set and let $E\subset X$ be a $\mu$-measurable set
	with $\mathcal H(\Sigma_{\beta} E\cap \Om)<\infty$,
	where $0<\beta\le 1/2$ only depends on the doubling constant of the measure
	and the constants in the Poincar\'e inequality. Then $P(E,\Om)<\infty$.
\end{theorem}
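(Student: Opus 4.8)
The plan is to prove the quantitative estimate $P(E,\Om)\le C\,\mathcal H(\Sigma_\beta E\cap\Om)$ for a constant $C=C(C_d,C_P,\lambda)$, by exhibiting good $\BV$-approximations of $\ch_E$. Since the total variation is inner regular with respect to open subsets, it suffices to establish this bound with $\Om$ replaced by an arbitrary open set $\Om'\Subset\Om$ (the right-hand side only becoming smaller) and then let $\Om'\nearrow\Om$; so I would fix such an $\Om'$ and work only at scales $r$ much smaller than $\dist(\Om',X\setminus\Om)$.

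Fix a small $r>0$. Take a maximal $r$-separated set $\{z_k\}\subset\Om'$, so that $\Om'\subset\bigcup_k B(z_k,r)$ and, by the doubling property, the balls $B(z_k,2r)$ have bounded overlap. Choose a Lipschitz partition of unity $\{\varphi_k\}$ with $\supp\varphi_k\subset B(z_k,2r)$, $0\le\varphi_k\le1$, $\Lip\varphi_k\le C/r$, and $\sum_k\varphi_k\equiv1$ on $\Om'$, put $a_k:=\mu(B(z_k,2r)\cap E)/\mu(B(z_k,2r))$, and define $u_r:=\sum_k a_k\varphi_k\in\liploc(\Om')$. Then $0\le u_r\le1$ and, by the Lebesgue differentiation theorem, $u_r\to\ch_E$ $\mu$-a.e.\ and hence in $L^1(\Om')$ as $r\to0$. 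Writing $u_r-a_l=\sum_k(a_k-a_l)\varphi_k$, one gets $g_{u_r}(x)\le(C/r)\max_k|a_k-a_l|$ for any $l$ with $x\in\supp\varphi_l$, the maximum being over the boundedly many $k$ with $x\in B(z_k,2r)$, all of which satisfy $B(z_k,2r)\cap B(z_l,2r)\ne\emptyset$. Letting $\delta_k$ denote the largest value of $|a_k-a_l|$ over such ``neighbours'' $l$ of $k$, this gives
\[
\int_\Om g_{u_r}\,d\mu\ \le\ \frac Cr\sum_k\delta_k\,\mu(B(z_k,r)).
\]
Finally, an elementary argument with the definitions shows that if $\delta_k>0$ then $E$ is \emph{$c\delta_k$-balanced} in $B(z_k,6r)$, i.e.\ both $\mu(B(z_k,6r)\cap E)$ and $\mu(B(z_k,6r)\setminus E)$ are at least $c\delta_k\,\mu(B(z_k,6r))$, with $c=c(C_d)$.

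The crux is the following quantitative statement, to be established for a suitable $\beta=\beta(C_d,C_P,\lambda)>0$: there are $c_0,\Lambda$ depending only on $C_d,C_P,\lambda$ such that if $E$ is $\eps$-balanced in a ball $B(z,\rho)$ with $B(z,\Lambda\rho)$ in the region where we work, then $\mathcal H\bigl(\Sigma_\beta E\cap B(z,\Lambda\rho)\bigr)\ge c_0\,\eps\,\mu(B(z,\rho))/\rho$. Granting this with $B(z,\rho)=B(z_k,6r)$, each $k$ with $\delta_k>0$ contributes
\[
\frac1r\,\delta_k\,\mu(B(z_k,r))\ \le\ \frac Cr\,\delta_k\,\mu(B(z_k,6r))\ \le\ C\,\mathcal H\bigl(\Sigma_\beta E\cap B(z_k,6\Lambda r)\bigr).
\]
Since the $z_k$ are $r$-separated, the balls $B(z_k,6\Lambda r)$ have bounded overlap and, for $r$ small, all lie in a fixed $\Om''$ with $\Om'\subset\Om''\Subset\Om$; summing over $k$ yields $\int_\Om g_{u_r}\,d\mu\le C\,\mathcal H(\Sigma_\beta E\cap\Om)$ with $C$ independent of $r$. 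Letting $r\to0$ in \eqref{eq:total variation} gives $P(E,\Om')\le C\,\mathcal H(\Sigma_\beta E\cap\Om)$, and the reduction of the first paragraph completes the argument. (Alternatively, applying the same quantitative statement along a Vitali cover of $\partial^*E\setminus\Sigma_\beta E$ shows $\mathcal H(\partial^*E\cap\Om)<\infty$, after which one may quote the Federer-type characterization \cite[Theorem 1.1]{L-Fedchar}.)

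The main obstacle is the quantitative statement itself. The relative isoperimetric inequality yields $\eps\,\mu(B(z,\rho))\le C\rho\,P(E,B(z,\lambda\rho))$ and, by \eqref{eq:def of theta} together with \eqref{eq:measure theoretic and strong boundary}, $P(E,B(z,\lambda\rho))\le C\,\mathcal H(\Sigma_\gamma E\cap B(z,\lambda\rho))$; chaining these produces precisely the desired inequality — but only under the a priori hypothesis $P(E,B(z,\lambda\rho))<\infty$, which is exactly what we are trying to prove. The remedy is a scale-iteration (stopping-time) argument: assuming $\Sigma_\beta E$ is too sparse in $B(z,\Lambda\rho)$, one decomposes the transition of $E$ inside $B(z,\rho)$ over finer and finer scales and shows that the $\eps$-balance must propagate downward with only a controlled loss in the balance constant and no loss in the ambient geometry, which in a space satisfying the Poincar\'e inequality cannot continue indefinitely — a contradiction. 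Carrying out this iteration carefully, and in particular extracting from it a threshold $\beta$ that depends only on $C_d,C_P,\lambda$, is the technical heart of the proof.
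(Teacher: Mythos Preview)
The paper does not prove this theorem at all: it is quoted verbatim from \cite[Theorem~1.1]{L-newFed} and used as a black box throughout. There is therefore no ``paper's own proof'' to compare your proposal against.

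As for the proposal itself, the architecture is sound --- the discrete-convolution approximation of $\ch_E$, the reduction of $\int g_{u_r}$ to a sum of ``imbalance'' terms $\delta_k\,\mu(B_k)/r$, and the inner-regular exhaustion $\Om'\Subset\Om$ are all standard and correct. The gap is that you do not prove the ``quantitative statement'' that does all the work: that an $\eps$-balanced ball forces $\mathcal H\bigl(\Sigma_\beta E\cap B(z,\Lambda\rho)\bigr)\gtrsim \eps\,\mu(B(z,\rho))/\rho$. You correctly identify the circularity in the naive route via the relative isoperimetric inequality and \eqref{eq:def of theta}, and you gesture at a stopping-time iteration as the remedy, but you stop short of carrying it out or even stating precisely what is being iterated and why it terminates. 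That iteration is not a detail one can wave through: producing a threshold $\beta$ depending only on $C_d,C_P,\lambda$, and showing that the balance propagates to arbitrarily small scales with bounded degradation so as to manufacture genuine points of $\Sigma_\beta E$ (not merely of $\partial^*E$), is exactly the content of \cite{L-newFed}. What you have written is a plausible plan for such a proof, with the hard step named but not executed; as it stands it is a proof outline rather than a proof.
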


Throughout this paper, we will use $\beta$ to denote the constant from this theorem;
we can assume that $\beta\le \gamma$.
Now combining Theorem \ref{thm:new Federer} and \eqref{eq:def of theta}, we obtain that for every
open $\Om\subset X$ and $\mu$-measurable $E\subset X$, we have
\[
	P(E,\Om)\le C\mathcal H(\Sigma_{\beta}E\cap\Om)
\]
for some constant $C\ge 1$ depending only on the constants $C_d,C_P,\lambda$.
By combining \eqref{eq:measure theoretic and strong boundary} and \eqref{eq:def of theta},
we also get
\[
\mathcal H(\Sigma_{\beta}E\cap\Om)\le CP(E,\Om).
\]
In total, we have
\begin{equation}\label{eq:perimeter and strong boundary}
\frac{1}{C}\mathcal H(\Sigma_{\beta}E\cap\Om)\le P(E,\Om)\le C\mathcal H(\Sigma_{\beta}E\cap\Om)
\end{equation}
for some constant $C\ge 1$ depending only on the constants $C_d,C_P,\lambda$.

For a function $u$ defined on an open set $\Om\subset X$,
we will often abbreviate super-level sets in the form
\[
\{u>t\}:=\{x\in \Om\colon u(x)>t\},\quad t\in\R.
\]
The following coarea formula is given in \cite[Proposition 4.2]{M}:
if $\Omega\subset X$ is open and $u\in L^1_{\loc}(\Omega)$, then
\begin{equation}\label{eq:coarea}
	\|Du\|(\Omega)=\int_{-\infty}^{\infty}P(\{u>t\},\Omega)\,dt.
\end{equation}
The integral should be understood as an upper integral; however if either side is finite, then
both sides are finite and the integrand is measurable.

The lower and upper approximate limits of a function $u$ on an open set
$\Om$
are defined respectively by
\[
	u^{\wedge}(x):
	=\sup\left\{t\in\R:\,\lim_{r\to 0}\frac{\mu(B(x,r)\cap\{u<t\})}{\mu(B(x,r))}=0\right\}
\]
and
\begin{equation}\label{eq:upper approximate limit}
	u^{\vee}(x):
	=\inf\left\{t\in\R:\,\lim_{r\to 0}\frac{\mu(B(x,r)\cap\{u>t\})}{\mu(B(x,r))}=0\right\}
\end{equation}
for $x\in \Om$.
We use the usual convention that the supremum and infimum of an empty set are
$-\infty$ and $\infty$, respectively.
The jump set of $u$ is then defined by
\[
S_u:=\{u^{\wedge}<u^{\vee}\}.
\]
Since we understand $u^{\wedge}$ and $u^{\vee}$ to be defined only on $\Om$,
also $S_u$ is understood to be a subset of $\Om$.
It is straightforward to check that $u^{\wedge}$ and $u^{\vee}$
are always Borel functions.

The following Lebesgue point result was proved in
\cite[Theorem 3.5]{KKST}: if $\Om\subset X$ is open and $u\in\BV(\Om)$, then
for $\mathcal H$-a.e. $x\in\Om$ we have
\begin{equation}\label{eq:Lebesgue points}
\lim_{r\to 0}\,\vint{B(x,r)}|u-u^{\vee}(x)|\,d\mu=0.
\end{equation}
(Equally well we could replace $u^{\vee}(x)$ by $u^{\wedge}(x)$.)

By \cite[Theorem 5.3]{AMP}, the variation measure of a $\BV$ function
can be decomposed into the absolutely continuous and singular part, and the latter
into the Cantor and jump part, as follows. Given an open set 
$\Omega\subset X$ and $u\in\BV(\Omega)$, we have for any Borel set $A\subset \Om$
\begin{equation}\label{eq:variation measure decomposition}
	\begin{split}
		\Vert Du\Vert(A) &=\Vert Du\Vert^a(A)+\Vert Du\Vert^s(A)\\
		&=\Vert Du\Vert^a(A)+\Vert Du\Vert^c(A)+\Vert Du\Vert^j(A)\\
		&=\int_{A}a\,d\mu+\Vert Du\Vert^c(A)+\int_{A\cap S_u}\int_{u^{\wedge}(x)}^{u^{\vee}(x)}\theta_{\{u>t\}}(x)\,dt\,d\mathcal H(x),
	\end{split}
\end{equation}
where $a\in L^1(\Omega)$ is the density of the absolutely continuous part
and the functions $\theta_{\{u>t\}}\in [\alpha,C_d]$ 
are as in \eqref{eq:def of theta}.\\

\emph{Throughout this paper we assume that $(X,d,\mu)$ is a complete metric space
	that is equipped with the doubling measure $\mu$ and supports a
	$(1,1)$-Poincar\'e inequality.}

\section{Existence of boundary traces}\label{sec:boundary traces}

In this section we study existence results for boundary traces on a certain part of the boundary.
The symbol $\Om\subset X$ always denotes an arbitrary open set.
Recall that when $u$ is a function defined on $\Om$, we denote
\[
\{u>t\}:=\{x\in \Om\colon u(x)>t\}\subset \Om,\quad t\in\R.
\]

We start with the definitions of the boundary traces, or traces for short, that we will study throughout the paper.

\begin{definition}\label{def:traces}
	Let $u$ be a $\mu$-measurable function on $\Om$.
	
The rough trace of $u$ at $x\in\partial\Om$ is
\begin{align*}
T_*u(x):=\sup\{t\in\R\colon \theta^*(\{u>t\},x)>0\}.
\end{align*}
As before, we interpret the supremum of an empty set to be $-\infty$.

We define a second version of the rough trace at $x\in\partial\Om$ by
\begin{align*}
T_{\beta}u(x):=\sup\{t\in\R\colon \theta_*(\{u>t\},x)\ge \beta\},
\end{align*}
where $0<\beta\le 1/2$ is the constant from Theorem \ref{thm:new Federer}.

Finally, if for $x\in \partial\Om$ there exists $b\in\R$ such that
\[
\lim_{r\to 0}\,\vint{\Om\cap B(x,r)}|u-b|\,d\mu=0,
\]
then we say that $Tu(x):=b$ is an (ordinary) trace of $u$ at $x$.
\end{definition}

\begin{remark}
In the classical definition of the rough trace given in \cite[Section 9.5]{Maz}, in the supremum 
it is required that $x\in \partial^*\{u>t\}$,
but we replace this with the weaker requirement $\theta^*(\{u>t\},x)>0$,
since we wish to consider also points $x\in\partial\Om$ where $\Om$ has density one.
Note that if $u\in\BV(\Om)$, it follows from the coarea formula \eqref{eq:coarea} that
\[
P(\{u>t\},\Om)<\infty\quad \textrm{for a.e. }t\in\R.
\]
In the classical definition, it is additionally required
that $\{u>t\}$ has finite perimeter in the whole space, that is $P(\{u>t\},X)<\infty$.
We do not wish to require this, since we consider very general open sets $\Om$
that could have infinite perimeter in
$X$, and then the super-level sets $\{u>t\}$ often also have infinite perimeter in $X$.
However, our definition coincides with the classical one $\mathcal H$-almost everywhere on the boundary
under the assumptions that $P(\Om,X)<\infty$ and $\mathcal H(\partial\Om\setminus \partial^*\Om)=0$,
which are assumed in the classical theory.

It is straightforward to check that all three traces are Borel functions on
$\partial\Om$ (in the case of $Tu$, more precisely it is Borel
on the subset of $\partial\Om$ where it is defined), and so
integrals with respect to the Borel outer measure $\mathcal H$ are well-defined.
In terms of comparisons between the traces, clearly $T_{\beta}u(x) \le T_*u(x)$
for all $x\in\partial\Om$. If $x\in \partial\Om$ such that $\theta_*(\Om,x)>0$ and $Tu(x)$ exists,
then it is easy to check that $T_*u(x)=T_{\beta}u(x)=Tu(x)$.
\end{remark}

We start by giving a simple example demonstrating that in
completely general open sets $\Om$, the (ordinary) trace $Tu$ might not exist at any point of the boundary $\partial\Om$.
For this reason, in our main results we will assume that $\Om$ has some regularity, formulated in terms
of local doubling and Poincar\'e conditions.

\begin{example}\label{ex:bad traces}
Let $C\subset [0,1]$ be the ternary Cantor set. Let $\Om:=(0,1)\setminus C$. Then
$\Om=\bigcup_{j=1}^{\infty}\Om_j$, where each $\Om_j$ consists of $2^{j-1}$ open intervals of length $3^{-j}$.
Let
\[
u:=\sum_{j=1}^{\infty}b_j\ch_{\Om_j},\quad b_j:=
\begin{cases}
j & \textrm{for }j\textrm{ odd,}\\
-j & \textrm{for }j\textrm{ even.}
\end{cases}
\]
Note that
\[
\Vert u\Vert_{L^1(\Om)}
= \frac{1}{2}\sum_{j=1}^{\infty}j \left(\frac{2}{3}\right)^j
<\infty.
\]
Moreover $\Vert Du\Vert(\Om)=0$, since $u$ is locally constant. Thus $u\in \BV(\Om)$.

Note that $\partial\Om=C$. Now for every $x\in C$ and every $t>0$,
for every odd $j> t$ we have that $B(x,3^{-j+1})$ contains an interval of length
$3^{-j}$ and belonging to $\{u>t\}$.
Denoting the $1$-dimensional Lebesgue measure by $\mathcal L^1$, we get
\[
\limsup_{r\to 0}\frac{\mathcal L^1(B(x,r)\cap \{u>t\})}{\mathcal L^1(B(x,r))}\ge \frac{1}{4}
\]
and similarly
\[
\limsup_{r\to 0}\frac{\mathcal L^1(B(x,r)\cap \{u<-t\})}{\mathcal L^1(B(x,r))}\ge \frac{1}{4}.
\]
Thus for every $x\in C$, we have $T_*u(x)=\infty$ while the trace $Tu(x)$ fails to exist.
Moreover, due to the alternating between negative and positive values
in $b_j$, the trace would still fail to exist with any reasonable definition that would allow
$Tu(x)$ to take the values $\pm \infty$.
\end{example}

In the rest of the work, most of the time we will
consider open sets $\Om$ that satisfy certain regularity near the boundary, at least locally.
In order to define such $\Om$, we will consider subsets $A\subset \overline{\Om}$
as metric spaces in their own right
(including the case $A=\overline{\Om}$).

\begin{definition}
	For any $A\subset \overline{\Om}$, we define the metric measure space $(A,d,\mu_A)$
	as follows.
The metric $d$ is simply inherited from $X$.
When $A\subset \Om$, we equip it with
the measure $\mu$ restricted to subsets of $A$.
This restriction is a Borel regular outer measure on 
$A$ by \cite[Lemma~3.3.11]{HKST}.
When $A\subset \overline{\Om}$ is Borel, we equip it with
the zero extension of $\mu$ from $\Omega\cap A$ to $A$, denoted by $\mu_A$.
That is, for every $D\subset A$ we have $\mu_{A}(D)=\mu(D\cap \Om)$.
By \cite[Lemma~3.3.16]{HKST} we know that $\mu_{A}$ is a Borel regular outer measure on $A$.
We denote a ball in the space $A$ by $B_A(x,r):=B(x,r)\cap A$.
We also denote by $\mathcal H_{A,R}$, $R>0$, and $\mathcal H_{A}$ the codimension one Hausdorff content and measure
in the space $(A,d,\mu_A)$.
\end{definition}

\begin{definition}\label{def:local conditions on boundary}
We say that the open set $\Om$ is \emph{PLB} ($1$-Poincar\'e space locally near its boundary)
if for $\mathcal H$-a.e. $x\in \partial\Om\cap \Om_{\beta}$ (recall \eqref{eq:definition of Omega beta})
there exists an open set $W\subset \Om$ such that $(W,d,\mu_W)$
	is a metric space for which $\mu_W$ is doubling and $W$ supports a $(1,1)$-Poincar\'e inequality,
	and $x\in V$ for some relatively open set $V\subset \overline{\Om}$ with $V\cap \Om\subset W$.
\end{definition}

Briefly, we will sometimes say that ``$(W,d,\mu_W)$ satisfies doubling and $(1,1)$-Poincar\'e''.
Given sets $V$ and $W$ as above, note that
\begin{equation}\label{eq:boundaries of Omega and W}
\partial\Om\cap V = \partial W\cap V.
\end{equation}

\begin{remark}\label{rem:locally Poincare near boundary}
	There is a wide range of domains $\Om$ that satisfy doubling and Poincar\'e globally
	(more precisely, $(\Om,d,\mu_\Om)$ satisfies doubling and $(1,1)$-Poincar\'e),
but for us it will be enough to assume the significantly weaker local conditions
as in Definition \ref{def:local conditions on boundary}. In this way, we include more open sets $\Om$
in our theory,
and even in cases where doubling and Poincar\'e hold globally in $\Om$, the local conditions are often much easier
to check.
In particular, for many Euclidean domains $\Om$ it is easy to check that for $\mathcal H$-a.e.
$x\in \partial\Om\cap \Om_{\beta}$, for sufficiently small $r>0$ the set $W:=B(x,r)\cap\Om$
has a Lipschitz boundary and is thus a uniform domain.
(Note that in the Euclidean space $(\R^n,d_{\textrm{euc}},\mathcal L^n)$, the codimension one Hausdorff measure
$\mathcal H$ is comparable to the $n-1$-dimensional Hausdorff measure $\mathcal H^{n-1}$.)
By \cite[Theorem 4.4]{BS}, the space $(W,d,\mu)$ then satisfies doubling and $(1,1)$-Poincar\'e.
We will use these facts in Examples \ref{ex:measure theoretic and strong boundary}
and \ref{ex:strong boundary}.
\end{remark}

\begin{lemma}\label{lem:Hausdorff zero measure sets agree}
	Let $V$ and $W$ be two sets as in Definition \ref{def:local conditions on boundary}.
	Let $A\subset V$.
	Then $\mathcal H_{\overline{W}}(A)=0$ implies that also $\widehat{\mathcal H}_{\overline{\Om}}(A)=0$.
\end{lemma}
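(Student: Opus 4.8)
The plan is to push forward a near-optimal covering: take a covering of $A$ by balls centered in $\overline{W}$ with small codimension-one content $\mathcal H_{\overline{W}}$, recenter each ball at a point of $A$, and check that the resulting family is an admissible covering for $\widehat{\mathcal H}_{\overline{\Om}}$ with arbitrarily small content. Two preparatory steps are needed. First, because $V$ is only relatively open in $\overline{\Om}$, I would break $A$ into pieces on which the geometry is uniform: for $y\in A\subset V$ put $\rho(y):=\sup\{\rho>0:\,B(y,\rho)\cap\overline{\Om}\subset V\}$, which is positive, and set $A_k:=\{y\in A:\,\rho(y)>1/k\}$, so that $A=\bigcup_k A_k$; since $\widehat{\mathcal H}_{\overline{\Om}}$ is an outer measure it suffices to prove $\widehat{\mathcal H}_{\overline{\Om}}(A_k)=0$ for each fixed $k$. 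I would also record that $A\subset\overline{W}$, since $V\cap\Om\subset W\subset\overline{W}$ and $V\cap\partial\Om=V\cap\partial W\subset\overline{W}$ by \eqref{eq:boundaries of Omega and W}; consequently $\mathcal H_{\overline{W}}(A_k)\le\mathcal H_{\overline{W}}(A)=0$.

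The crux is the observation that for $y\in A_k$ and $0<r<1/k$ one has $B(y,r)\cap\Om\subset B(y,r)\cap\overline{\Om}\subset V$, hence $B(y,r)\cap\Om\subset V\cap\Om\subset W$; thus $B(y,r)\cap\Om=B(y,r)\cap W$ and in particular $\mu_{\overline{\Om}}(B_{\overline{\Om}}(y,r))=\mu(B(y,r)\cap W)$. In other words, near $A_k$ and at small scales the $\mu_{\overline{\Om}}$-content of a ball is exactly the $\mu$-mass of the corresponding slice of $W$, and it is this identification that lets the local geometry of $W$ control $\widehat{\mathcal H}_{\overline{\Om}}$.

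For the main estimate, fix $k$, let $0<R<1/k$ and $\delta>0$. Since $\mathcal H_{\overline{W}}(A_k)=0$, also $\mathcal H_{\overline{W},R/2}(A_k)=0$, so there is a covering $A_k\subset\bigcup_j B_{\overline{W}}(x_j,r_j)$ with $x_j\in\overline{W}$, $r_j\le R/2$ and $\sum_j\mu_{\overline{W}}(B_{\overline{W}}(x_j,r_j))/r_j<\delta$; discarding balls disjoint from $A_k$, I may assume each $B(x_j,r_j)$ meets $A_k$ and choose $y_j\in B(x_j,r_j)\cap A_k$. Then $A_k\subset\bigcup_j B_{\overline{\Om}}(y_j,2r_j)$ with $y_j\in A_k$ and $2r_j\le R$, an admissible covering for $\widehat{\mathcal H}_{\overline{\Om},R}(A_k)$. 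Using the crux observation (valid since $2r_j<1/k$) together with $B(y_j,2r_j)\subset B(x_j,3r_j)$, then picking $w_j\in W\cap B(x_j,r_j/10)$ (nonempty because $x_j\in\overline{W}$) and applying the doubling of $\mu_W$ at the legitimate center $w_j$ a bounded number of times, I would obtain
\[
\mu_{\overline{\Om}}(B_{\overline{\Om}}(y_j,2r_j))=\mu(B(y_j,2r_j)\cap W)\le\mu(B(x_j,3r_j)\cap W)\le C\,\mu(B(x_j,r_j)\cap W)\le C\,\mu_{\overline{W}}(B_{\overline{W}}(x_j,r_j)),
\]
where $C$ depends only on the doubling constant of $\mu_W$ and the last step uses $W\subset\overline{W}\cap\Om$. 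Summing over $j$ gives $\sum_j\mu_{\overline{\Om}}(B_{\overline{\Om}}(y_j,2r_j))/(2r_j)\le(C/2)\delta$; letting $\delta\to0$ yields $\widehat{\mathcal H}_{\overline{\Om},R}(A_k)=0$ for every $R<1/k$, hence $\widehat{\mathcal H}_{\overline{\Om}}(A_k)=0$, and summing over $k$ completes the argument.

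The step I expect to be the main obstacle is the displayed chain of inequalities — specifically the passage from $B(x_j,3r_j)$ to $B(x_j,r_j)$: the centers $x_j$ produced by the definition of $\mathcal H_{\overline{W}}$ need only lie in $\overline{W}$, not in $W$, so the doubling of $\mu_W$ cannot be invoked at $x_j$ directly; sliding the center to a nearby $w_j\in W$ — possible exactly because $x_j\in\overline{W}$ — repairs this at the cost of a universal constant. A minor point to check along the way is that $\overline{W}\cap\Om$ may strictly contain $W$ (e.g. $\partial W$ may carry positive $\mu$-measure), so $\mu_{\overline{W}}(B_{\overline{W}}(x,r))$ may exceed $\mu(B(x,r)\cap W)$; only the inequality in that direction is ever used, so this causes no trouble.
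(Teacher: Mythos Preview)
Your proof is correct and follows essentially the same approach as the paper: decompose $A$ according to distance from $\overline{\Om}\setminus V$, take a near-optimal covering in $\overline{W}$, recenter the balls at points of the piece, and compare measures using doubling. The only cosmetic difference is that the paper invokes the doubling of $\mu_{\overline{W}}$ directly to recenter the $x_j$ into $V_\delta\cap A$ and then obtains an exact equality $\mu_{\overline{W}}(B_{\overline{W}}(x_j,r_j))=\mu_{\overline{\Om}}(B_{\overline{\Om}}(x_j,r_j))$, whereas you recenter by enlarging radii and then apply the doubling of $\mu_W$ via an auxiliary point $w_j\in W$; both routes accomplish the same thing.
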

Here $\widehat{\mathcal H}_{\overline{\Om}}(A)$ is the ``centered'' version of 
$\mathcal H_{\overline{\Om}}$, that is, in the definition the coverings are required to be centered in the set
$A$.

Note that from Definition \ref{def:local conditions on boundary} we get $V\subset \overline{W}$, so then also
$A\subset \overline{W}$.
\begin{proof}
Suppose $\mathcal H_{\overline{W}}(A)=0$.
Let
\[
V_{\delta}:=\{x\in V\colon d(x,\overline{\Om}\setminus V)>\delta\},\quad \delta>0.
\]
We have $V=\bigcup_{j=1}^{\infty}V_{1/j}$, and so it is enough to prove that
$\widehat{\mathcal H}_{\overline{\Om}}(V_{\delta}\cap A)=0$
for an arbitrary but fixed $\delta>0$.

Fix also $0<\eps<\delta$.
We find a covering $\{B_{\overline{W}}(x_j,r_j)\}_{j=1}^{\infty}$ of $V_{\delta}\cap A$
in the space $\overline{W}$ such that $r_j<\eps$ and
\[
\sum_{j=1}^{\infty}\frac{\mu_{\overline{W}}(B_{\overline{W}}(x_j,r_j))}{r_j}<\eps.
\]
By the doubling property of $\mu_{\overline{W}}$,
we can assume that  $x_j\in V_{\delta}\cap A$.
Thus $B_{\overline{\Om}}(x_j,r_j)\subset V\subset \overline{W}$,
so that $\overline{W}\cap B(x_j,r_j)=\overline{\Om}\cap B(x_j,r_j)$.
Then by \eqref{eq:boundaries of Omega and W}, we have $W\cap B(x_j,r_j)=\Om\cap B(x_j,r_j)$. Thus
\[
\mu_{\overline{W}}(B_{\overline{W}}(x_j,r_j))
=\mu(W \cap B(x_j,r_j))
=\mu(\Om\cap B(x_j,r_j))
=\mu_{\overline{\Om}}(B_{\overline{\Om}}(x_j,r_j)).
\]
Thus also
\[
\widehat{\mathcal H}_{\overline{\Om},\eps}(V_{\delta}\cap A)\le \sum_{j=1}^{\infty}\frac{\mu_{\overline{\Om}}(B_{\overline{\Om}}(x_j,r_j))}{r_j}<\eps.
\]
Since $\eps>0$ was arbitrary, we get $\widehat{\mathcal H}_{\overline{\Om}}(V_{\delta}\cap A)=0$.
\end{proof}

\begin{lemma}\label{lem:H and overlineH}
	Suppose $A\subset \overline{\Om}$ such that $\theta_*(\Om,x)>0$ for all $x\in A$.
	Then $\mathcal H(A)=0$ if and only if $\widehat{\mathcal H}_{\overline{\Om}}(A)=0$.
\end{lemma}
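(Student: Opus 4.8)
The plan is to treat the two implications separately. The forward implication is elementary and uses neither the density hypothesis nor any structure beyond doubling; the converse is where $\theta_*(\Om,\cdot)>0$ on $A$ enters.

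\medskip
\noindent\emph{From $\mathcal H(A)=0$ to $\widehat{\mathcal H}_{\overline\Om}(A)=0$.} By the comparability of $\widehat{\mathcal H}$ and $\mathcal H$ in $X$, from $\mathcal H(A)=0$ we get $\widehat{\mathcal H}(A)=0$, so for every $\eps,R>0$ there is a covering $\{B(x_j,r_j)\}_j$ of $A$ with $x_j\in A$, $r_j\le R$ and $\sum_j\mu(B(x_j,r_j))/r_j<\eps$. Since $A\subset\overline\Om$, the sets $B_{\overline\Om}(x_j,r_j)=B(x_j,r_j)\cap\overline\Om$ still cover $A$, are centered in $A$, and satisfy $\mu_{\overline\Om}(B_{\overline\Om}(x_j,r_j))=\mu(B(x_j,r_j)\cap\Om)\le\mu(B(x_j,r_j))$, so $\widehat{\mathcal H}_{\overline\Om,R}(A)<\eps$. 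Letting $\eps\to0$ and then $R\to0$ gives the conclusion. No recentering is needed here, so this direction is routine.

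\medskip
\noindent\emph{From $\widehat{\mathcal H}_{\overline\Om}(A)=0$ to $\mathcal H(A)=0$.} For $k,m\in\N$ set
\[
A_{k,m}:=\Bigl\{x\in A:\ \mu(B(x,r)\cap\Om)\ge\tfrac1k\mu(B(x,r))\ \text{ for all }0<r<\tfrac1m\Bigr\}.
\]
Since $\theta_*(\Om,x)>0$ is a positive $\liminf$, every $x\in A$ lies in some $A_{k,m}$, so $A=\bigcup_{k,m}A_{k,m}$; by countable subadditivity of $\mathcal H$ it suffices to prove $\mathcal H(A_{k,m})=0$ for each fixed $k,m$. The point of this decomposition is that on $A_{k,m}$ we have the uniform estimate $\mu(B(x,r))\le k\mu(B(x,r)\cap\Om)$ for $0<r<1/m$, and hence, using the doubling of $\mu$, that $r\mapsto\mu(B(x,r)\cap\Om)$ is doubling along $A_{k,m}$ up to scale $\sim1/m$, with a constant depending only on $k$ and $C_d$. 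Fixing $\eps>0$ and $R$ small relative to $1/m$, I would take a covering $\{B_{\overline\Om}(x_j,r_j)\}_j$ of $A$ with $x_j\in A$, $r_j<R$ and $\sum_j\mu(B(x_j,r_j)\cap\Om)/r_j<\eps$, discard the balls not meeting $A_{k,m}$, and enlarge each remaining ball, at the cost of roughly doubling its radius, to a ball $B(z_j,2r_j)$ centered at some $z_j\in A_{k,m}$. These balls cover $A_{k,m}$, and applying the standard $5r$-covering lemma followed by the density bound $\mu(B(z_j,\cdot))\le k\mu(B(z_j,\cdot)\cap\Om)$ at the selected balls reduces the estimate of $\mathcal H(A_{k,m})$ to a weighted sum $\sum\mu(\,\cdot\cap\Om)/(\text{radius})$ over these balls, which the doubling of $\mu(\cdot\cap\Om)$ along $A_{k,m}$ is meant to tie back to $\sum_j\mu(B(x_j,r_j)\cap\Om)/r_j<\eps$. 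Since this would hold for every small $R$ with a constant depending only on $k$ and $C_d$, letting $\eps\to0$ would give $\mathcal H(A_{k,m})=0$, and hence $\mathcal H(A)=0$.

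\medskip
\noindent\emph{The main obstacle.} All the difficulty is in the last part of the second step: the covering of $A$ produced by $\widehat{\mathcal H}_{\overline\Om}(A)=0$ has its centers in $A$, not in $A_{k,m}$, and recentering them onto $A_{k,m}$ enlarges the balls, which is dangerous because $\mu(\cdot\cap\Om)$ need not be doubling at the original centers. The resolution has to exploit that on each $A_{k,m}$ the uniform lower density makes $\mu(\cdot\cap\Om)$ comparable to $\mu(\cdot)$, hence doubling, at scales below $1/m$, and to combine this with a Vitali-type selection so that the enlarged balls are not overcounted; keeping track of the various scale thresholds so that all the density and doubling estimates are legitimate is the real work. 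By contrast, the decomposition $A=\bigcup A_{k,m}$ and the appeal to subadditivity are standard.
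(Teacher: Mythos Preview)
Your overall strategy---decompose $A$ into the sets $A_{k,m}$ where the lower density of $\Om$ is uniformly bounded below, and then show $\mathcal H(A_{k,m})=0$ for each---is exactly the paper's approach (the paper uses a single index $j$, defining $A_j=\{x\in A:\inf_{0<r\le 1/j}\mu(\Om\cap B(x,r))/\mu(B(x,r))\ge 1/j\}$). The forward direction is also handled the same way.

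Where you diverge from the paper is in the execution of the converse, and here you are making life much harder than necessary. The paper does not start from a covering of $A$ and try to pass to $A_j$ by recentering. Instead it first observes that $\widehat{\mathcal H}_{\overline\Om}(A_j)=0$ (writing ``of course'', i.e.\ treating this as an immediate consequence of $A_j\subset A$), and then simply takes a covering $\{B_{\overline\Om}(x_k,r_k)\}$ of $A_j$ \emph{already centered in} $A_j$, with $r_k<\eps<1/j$ and $\sum_k\mu(B(x_k,r_k)\cap\Om)/r_k<\eps/j$. Since every center satisfies $x_k\in A_j$, the uniform density bound applies at once: $\mu(B(x_k,r_k))\le j\,\mu(B(x_k,r_k)\cap\Om)$, and hence
\[
\mathcal H_\eps(A_j)\le\sum_k\frac{\mu(B(x_k,r_k))}{r_k}\le j\sum_k\frac{\mu(B(x_k,r_k)\cap\Om)}{r_k}<\eps.
\]
That is the whole argument. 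Your ``main obstacle''---that the covering of $A$ has centers in $A$ rather than in $A_{k,m}$---simply does not arise once you work with a covering of $A_{k,m}$ itself; there is no recentering, no $5r$-lemma, and no overlap bookkeeping.

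As for your proposed Vitali route, it does not close the gap you identify. After extracting disjoint balls $B(z_i,2r_i)$ with $z_i\in A_{k,m}$ and applying the density bound there, you are left needing to control $\sum_i\mu(B(z_i,2r_i)\cap\Om)/r_i$ by the original sum $\sum_j\mu(B(x_j,r_j)\cap\Om)/r_j$. But $B(x_{j},r_{j})\subset B(z_{j},2r_{j})$, so the termwise inequality goes the wrong way, and disjointness of the $B(z_i,2r_i)$ gives no control over a sum weighted by $1/r_i$. The ``doubling of $\mu(\cdot\cap\Om)$ along $A_{k,m}$'' you invoke is a statement about balls centered in $A_{k,m}$; it says nothing about shrinking $B(z_j,2r_j)\cap\Om$ back to $B(x_j,r_j)\cap\Om$ when $x_j\notin A_{k,m}$.
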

\begin{proof}
	First suppose that $\mathcal H(A)=0$. Fix $\eps>0$.
	There exists a covering $\{B(x_k,r_k)\}_{k=1}^{\infty}$ of $A$ with $r_k<\eps/2$ and
	\begin{equation}\label{eq:covering of a choice}
	\sum_{k=1}^{\infty}\frac{\mu(B(x_k,r_k))}{r_k}<\frac{\eps}{C_d^2}.
	\end{equation}
	We can assume that every ball in the covering intersects $A$.
	Thus for every $k\in\N$ we find a point $y_k\in B(x_k,r_k)\cap A$.
	In particular $y_k\in \overline{\Om}$, and $\{B_{\overline{\Om}}(y_k,2r_k)\}_{k=1}^{\infty}$
	is a covering of $A$ in the space $\overline{\Om}$, with $B(y_k,2r_k)\subset B(x_k,3r_k)$. Thus we get
	\[
	\widehat{\mathcal H}_{\overline{\Om},\eps}(A)
	\le \sum_{k=1}^{\infty}\frac{\mu_{\overline{\Om}}(B_{\overline{\Om}}(y_k,2r_k))}{2r_k}
	\le \sum_{k=1}^{\infty}\frac{\mu(B(x_k,3r_k))}{r_k}<\eps
	\]
	by \eqref{eq:covering of a choice}.
	It follows that $\widehat{\mathcal H}_{\overline{\Om}}(A)=0$.
	
	Conversely, suppose that $\widehat{\mathcal H}_{\overline{\Om}}(A)=0$.
	For each $j\in\N$, let
	\[
	A_j:=\left\{x\in A\colon \inf_{0<r\le 1/j}\frac{\mu(\Om\cap B(x,r))}{\mu(B(x,r))}\ge \frac{1}{j}\right\}.
	\]
	Then $A=\bigcup_{j=1}^{\infty}A_j$,
	and of course $\widehat{\mathcal H}_{\overline{\Om}}(A_j)=0$ for all $j\in\N$.
	Fix $j\in\N$ and fix $0<\eps<1/j$.
	There exists a covering $\{B_{\overline{\Om}}(x_k,r_k)\}_{k=1}^{\infty}$ of $A_j$ with $x_k\in A_j$,
	$r_k< \eps$, and
	\[
	\sum_{k=1}^{\infty}\frac{\mu_{\overline{\Om}}(B_{\overline{\Om}}(x_k,r_k))}{r_k}<\frac{\eps}{j}.
	\]
	Now $\{B(x_k,r_k)\}_{k=1}^{\infty}$ is a covering of $A_j$ in $X$, and so
	\[
	\mathcal H_{\eps}(A_j)\le \sum_{k=1}^{\infty}\frac{\mu(B(x_k,r_k))}{r_k}
	\le j\sum_{k=1}^{\infty}\frac{\mu(\Om\cap B(x_k,r_k))}{r_k}
	=j\sum_{k=1}^{\infty}\frac{\mu_{\overline{\Om}}(B_{\overline{\Om}}(x_k,r_k))}{r_k}
	<\eps.
	\]
	In conclusion, $\mathcal H(A_j)=0$.
	It follows that
	\[
	\mathcal H(A) \le \sum_{j=1}^{\infty}\mathcal H(A_j)=0.
	\]
\end{proof}

The statement of the following lemma is quite close to the definition of the total variation
\eqref{eq:total variation};
the idea is simply that we can have $u_i\in N^{1,1}(\Om)$ and convergence in $L^1(\Om)$,
instead of only $u_i\in N_{\loc}^{1,1}(\Om)$ and convergence in $L_{\loc}^1(\Om)$.

\begin{lemma}\label{lem:N11 in definition of total variation}
	Let $u\in\BV(\Om)$. Then there exists a sequence of functions $u_i\in N^{1,1}(\Om)$ with
	$u_i\to u$ in $L^1(\Om)$ and
	\[
	\Vert Du\Vert(\Om)=\lim_{i\to\infty}\int_{\Om}g_{u_i}\,d\mu,
	\]
	where each $g_{u_i}$ is the minimal $1$-weak upper gradient of $u_i$ in $\Om$.
\end{lemma}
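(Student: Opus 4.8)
The plan is to reduce the statement to the following: for each $\eps>0$ there exists $w\in N^{1,1}(\Om)$ with $\|w-u\|_{L^1(\Om)}<\eps$ and $\int_\Om g_w\,d\mu\le\|Du\|(\Om)+C\eps$, where $C$ depends only on the doubling constant. Granting this, set $u_i:=w$ for $\eps=1/i$. Then $u_i\to u$ in $L^1(\Om)$, hence in $L^1_{\loc}(\Om)$, and since $u_i\in N^{1,1}_{\loc}(\Om)$ the definition \eqref{eq:total variation} of the total variation gives the \emph{automatic} lower bound $\|Du\|(\Om)\le\liminf_i\int_\Om g_{u_i}\,d\mu$; combined with $\limsup_i\int_\Om g_{u_i}\,d\mu\le\|Du\|(\Om)$ (from the reduced statement, letting $\eps\to0$) this yields $\lim_i\int_\Om g_{u_i}\,d\mu=\|Du\|(\Om)$.

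To construct $w$, fix $\eps>0$; recall that $\|Du\|$ is a finite Radon measure on $\Om$. Take an exhaustion $\Om_1\Subset\Om_2\Subset\cdots$, $\bigcup_h\Om_h=\Om$, of the form $\Om_h=\{x\in\Om:\dist(x,X\setminus\Om)>c_h\}\cap B(x_0,R_h)$ with $c_h\downarrow0$, $R_h\uparrow\infty$. Since for all but countably many values of $c$ (resp. $R$) the level set $\{\dist(\cdot,X\setminus\Om)=c\}$ (resp. the sphere of radius $R$ about $x_0$) is both $\mu$-null and $\|Du\|$-null, we may arrange $\mu(\partial\Om_h)=\|Du\|(\partial\Om_h)=0$ for all $h$; and taking $c_1$ small, $R_1$ large we may also assume $\|Du\|(\Om\setminus\overline{\Om_1})<\eps$. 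Put $\Om_0:=\emptyset$, $A_h:=\Om_{h+1}\setminus\overline{\Om_{h-1}}$, $Z_h:=\Om_{h+1}\setminus\overline{\Om_h}$. Then $\{A_h\}$ is a locally finite open cover of $\Om$ with two-fold overlap, $Z_h=A_h\cap A_{h+1}$, the $Z_h$ are pairwise disjoint with $\sum_h\|Du\|(Z_h)=\|Du\|(\bigcup_hZ_h)\le\|Du\|(\Om\setminus\overline{\Om_1})<\eps$, and $\Om=\Om_1\cup\bigcup_hZ_h$ up to a $\mu$- and $\|Du\|$-null set. Fix a Lipschitz partition of unity $\{\pip_h\}$ subordinate to $\{A_h\}$; because of the annular structure, $\pip_1\equiv1$ on $\Om_1$, and on each $Z_h$ only $\pip_h,\pip_{h+1}$ are nonzero with $\pip_h+\pip_{h+1}=1$.

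For each $h$ choose open $A_h'$ with $\overline{A_h}\subset A_h'\Subset\Om$ and $\|Du\|(A_h')<\|Du\|(\overline{A_h})+\eps2^{-h}$. By the definition of $\|Du\|(A_h')$ there are $v^h_i\in N^{1,1}_{\loc}(A_h')$ with $v^h_i\to u$ in $L^1_{\loc}(A_h')$ and, after passing to a subsequence, $\int_{A_h'}g_{v^h_i}\,d\mu\to\|Du\|(A_h')$; restricting to a neighbourhood $W_h$ of $\overline{A_h}$ with $\overline{W_h}\subset A_h'$ we get $v^h_i\in N^{1,1}(W_h)$ and $v^h_i\to u$ in $L^1(W_h)$, while lower semicontinuity of the total variation under $L^1_{\loc}$-convergence (immediate from \eqref{eq:total variation}) gives $\liminf_i\int_Ug_{v^h_i}\,d\mu\ge\|Du\|(U)$ for every open $U\subset A_h'$, hence $g_{v^h_i}\mu\rightharpoonup\|Du\|$ weak-$*$ in $A_h'$ and $\limsup_i\int_Kg_{v^h_i}\,d\mu\le\|Du\|(K)$ for every compact $K\subset A_h'$. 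Now pick indices $i_h$, all sufficiently large, so that $\|v^h_{i_h}-u\|_{L^1(A_h)}<\eps2^{-h}$, so that the integrals of $g_{v^h_{i_h}}$ over $\overline{Z_{h-1}}$, over $\overline{Z_h}$, and (for $h=1$) over $\overline{\Om_1}$ exceed the respective $\|Du\|$-values by at most $\eps2^{-h}$, and so that $L_h\|v^{h+1}_{i_{h+1}}-v^h_{i_h}\|_{L^1(Z_h)}<\eps2^{-h}$, where $L_h$ is a Lipschitz constant of $\pip_{h+1}$ on $Z_h$; each of these is a lower bound on the relevant index (the last two involve two consecutive indices), so taking each $i_h$ to be the maximum of the finitely many thresholds involving it gives a consistent choice. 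Put $w:=\sum_h\pip_hv^h_{i_h}$, a locally finite sum; each term $\pip_hv^h_{i_h}$ lies in $N^{1,1}(\Om)$ since it is supported in the compact set $\overline{A_h}\subset W_h$, so $w\in N^{1,1}_{\loc}(\Om)$, and $\|w-u\|_{L^1(\Om)}\le\sum_h\|v^h_{i_h}-u\|_{L^1(A_h)}<\eps$.

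Finally one estimates $\int_\Om g_w\,d\mu$, which is the only real difficulty. On $\Om_1$ one has $w=v^1_{i_1}$, so $\int_{\Om_1}g_w\,d\mu\le\int_{\overline{\Om_1}}g_{v^1_{i_1}}\,d\mu\le\|Du\|(\overline{\Om_1})+\eps\le\|Du\|(\Om)+\eps$. On each $Z_h$ one writes $w=v^h_{i_h}+\pip_{h+1}\bigl(v^{h+1}_{i_{h+1}}-v^h_{i_h}\bigr)$ and applies the sum and product rules for upper gradients in a neighbourhood of $\overline{Z_h}$; the point is that although $L_h\to\infty$, the product rule is applied to the \emph{difference} $v^{h+1}_{i_{h+1}}-v^h_{i_h}$, which is tiny in $L^1(Z_h)$ because both functions approximate $u$ there. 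This gives, $\mu$-a.e.\ on $Z_h$,
\[
g_w\le 2\,g_{v^h_{i_h}}+g_{v^{h+1}_{i_{h+1}}}+L_h\,\bigl|v^{h+1}_{i_{h+1}}-v^h_{i_h}\bigr|,
\]
and integrating — using $\overline{Z_h}\subset A_h'$, $\overline{Z_h}\subset A_{h+1}'$, the weak-$*$ bounds above, and $\|Du\|(\partial Z_h)=0$ — yields $\int_{Z_h}g_w\,d\mu\le 3\|Du\|(Z_h)+C\eps2^{-h}$. Summing over $h$ and invoking $\sum_h\|Du\|(Z_h)<\eps$ gives $\sum_h\int_{Z_h}g_w\,d\mu\le C\eps$, so $\int_\Om g_w\,d\mu=\int_{\Om_1}g_w\,d\mu+\sum_h\int_{Z_h}g_w\,d\mu\le\|Du\|(\Om)+C\eps$; in particular $g_w\in L^1(\Om)$, whence $w\in N^{1,1}(\Om)$, proving the reduced statement. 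The main obstacle is exactly the term $L_h|v^{h+1}_{i_{h+1}}-v^h_{i_h}|$: in the Euclidean argument its analogue is annihilated by the identity $\sum_h\nabla\pip_h=0$, but a scalar upper gradient registers no such vectorial cancellation, and the remedy is the annular decomposition together with the fact that on each overlap $Z_h$ every local approximant is $L^1$-close to $u$, so the differences can be forced below whatever threshold the large Lipschitz constants $L_h$ demand.
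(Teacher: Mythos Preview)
Your argument is correct: this is the standard Anzellotti--Giaquinta/Meyers--Serrin partition-of-unity construction, carried out with $N^{1,1}$ approximants in place of mollifications. The index-selection step is phrased a bit loosely (the condition $L_h\|v^{h+1}_{i_{h+1}}-v^h_{i_h}\|_{L^1(Z_h)}<\eps 2^{-h}$ couples consecutive indices), but it goes through cleanly if one proceeds inductively, imposing at stage $h$ also $L_h\|v^h_{i_h}-u\|_{L^1(Z_h)}<\tfrac12\eps 2^{-h}$ so that the next choice has room. The claim that $C$ depends on the doubling constant is a harmless slip: in fact nothing beyond properness of $X$ and the Leibniz/sum rules for weak upper gradients is used, and the constant is absolute.

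The paper does \emph{not} give a proof here; it simply quotes \cite[Corollary~6.7]{LaSh}, noting that the ``strong relative isoperimetric inequality'' needed there is supplied by \cite[Corollary~5.6]{L-Fedchar}. So your route is genuinely different: the cited argument passes through the (relative) isoperimetric inequality, and hence through the Poincar\'e inequality on $X$, whereas your construction is entirely elementary and would work on any proper metric measure space (indeed, it never invokes the Poincar\'e inequality). The trade-off is that the paper's one-line citation also yields, from the same source, Lipschitz approximants rather than merely $N^{1,1}$ ones, which is sometimes convenient (and is used later in the proof of Proposition~\ref{prop:existence of trace}); your argument gives $N^{1,1}$ approximants, which is exactly what the lemma asks for.
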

\begin{proof}
	This is given as part of \cite[Corollary 6.7]{LaSh}. Note that the ``strong relative isoperimetric inequality''
	mentioned there is proved in \cite[Corollary 5.6]{L-Fedchar}.
\end{proof}

Now we prove the existence of the trace $Tu$ on a part of the boundary.
This gives Claim (1) of Theorem \ref{thm:main theorem}.

\begin{proposition}\label{prop:existence of trace}
	Suppose $\Om$ is PLB.
	Let $u\in\BV(\Om)$.
	Then the trace $Tu(x)$ exists for $\mathcal H$-a.e. $x\in\partial \Om\cap \Om_{\beta}$.
\end{proposition}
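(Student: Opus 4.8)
The plan is to reduce the claim to a local statement using the PLB hypothesis, prove it inside the auxiliary metric space $(\overline{W},d,\mu_{\overline W})$ attached to a boundary point, and then pass from the Hausdorff measures of $\overline W$ and $\overline\Om$ to $\mathcal H$ by means of Lemmas \ref{lem:Hausdorff zero measure sets agree} and \ref{lem:H and overlineH}.

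First I would fix $x\in\partial\Om\cap\Om_{\beta}$ where the PLB condition holds, with the associated open set $W\subset\Om$ (so $(W,d,\mu_W)$ is doubling and supports a $(1,1)$-Poincar\'e inequality) and the relatively open set $V=V'\cap\overline\Om$, with $V'\subset X$ open, $x\in V$, and $V\cap\Om\subset W$. For $y\in V$ and $r$ small we have $B(y,r)\subset V'$, so $B(y,r)\cap\Om=B(y,r)\cap W\subset\overline W$; consequently $\mu_{\overline W}(B_{\overline W}(y,r))=\mu(\Om\cap B(y,r))$ and
\[
\vint{B_{\overline W}(y,r)}|u-c|\,d\mu_{\overline W}=\vint{\Om\cap B(y,r)}|u-c|\,d\mu\qquad(c\in\R).
\]
Recalling $\partial\Om\cap V=\partial W\cap V$ from \eqref{eq:boundaries of Omega and W}, a point of $\partial\Om\cap V$ at which $u$, viewed as a function on $\overline W$, has a Lebesgue point in the space $\overline W$ is then exactly a point where the ordinary trace $Tu$ exists. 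Next I would record that $(\overline W,d,\mu_{\overline W})$ is a complete metric space in which $\mu_{\overline W}$ is doubling and which supports a $(1,1)$-Poincar\'e inequality (the doubling property transfers from $\mu_W$ by a short chaining argument, since a ball centred at a point of $\partial W$ contains a ball centred in $W$ and sits inside a comparable one, and the Poincar\'e inequality transfers from $W$ to $\overline W$ in the same spirit; alternatively one is exactly in the setting of the trace theorems of \cite{LaSh,MSS}), and that $u$, regarded as a function on $\overline W$, is of bounded variation there with $\|Du\|(\overline W)<\infty$ (here $u\in\BV(W)$ with $\|Du\|(W)\le\|Du\|(\Om)$ because $W\subset\Om$ is open, and one checks $u\in\BV(\overline W)$ using that $W$ is open and dense in $\overline W$, Lemma \ref{lem:N11 in definition of total variation} being convenient). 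Then the Lebesgue point theorem \eqref{eq:Lebesgue points}, applied in $\overline W$, shows that $Tu(y)$ exists for $\mathcal H_{\overline W}$-a.e.\ $y\in\partial\Om\cap V$. In particular
\[
E_x:=\{y\in\partial\Om\cap\Om_{\beta}\cap V\colon Tu(y)\text{ does not exist}\}
\]
satisfies $\mathcal H_{\overline W}(E_x)=0$; since $E_x\subset V$, Lemma \ref{lem:Hausdorff zero measure sets agree} gives $\widehat{\mathcal H}_{\overline\Om}(E_x)=0$, and since $\theta_*(\Om,\cdot)\ge\beta>0$ on $E_x$, Lemma \ref{lem:H and overlineH} upgrades this to $\mathcal H(E_x)=0$.

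To finish I would globalize by a covering argument. Let $P\subset\partial\Om\cap\Om_{\beta}$ be the set of points at which the PLB condition holds, so $\mathcal H((\partial\Om\cap\Om_{\beta})\setminus P)=0$ by Definition \ref{def:local conditions on boundary}. For each $x\in P$ pick $V_x$ as above; then $\{V_x\}_{x\in P}$ is a cover of $P$ by relatively open subsets of $\overline\Om$, and since $X$ is proper, hence separable, $\overline\Om$ is Lindel\"of, so there is a countable subcover $\{V_{x_k}\}_{k}$ of $P$. Writing $N:=\{x\in\partial\Om\cap\Om_{\beta}\colon Tu(x)\text{ does not exist}\}$, we have $N\cap P\subset\bigcup_k(N\cap V_{x_k})=\bigcup_k E_{x_k}$, whence
\[
\mathcal H(N)\le\mathcal H(N\cap P)+\mathcal H\big((\partial\Om\cap\Om_{\beta})\setminus P\big)\le\sum_k\mathcal H(E_{x_k})+0=0,
\]
which is the assertion.

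I expect the main obstacle to be the preliminary step: verifying that $(\overline W,d,\mu_{\overline W})$ inherits doubling and the $(1,1)$-Poincar\'e inequality from $(W,d,\mu_W)$, and that $u$ is genuinely of bounded variation on the whole space $\overline W$, so that the Lebesgue point theorem \eqref{eq:Lebesgue points} is legitimately applicable at points of $\partial W=\partial\Om\cap V$ (rather than merely in the interior of $W$). Once this is in place, everything after it is the measure-theoretic bookkeeping encoded in Lemmas \ref{lem:Hausdorff zero measure sets agree} and \ref{lem:H and overlineH}, together with the Lindel\"of covering.
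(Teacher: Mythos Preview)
Your overall strategy is the paper's: localize via PLB to a pair $V\subset\overline W$, pass to the complete space $(\overline W,d,\mu_{\overline W})$, apply the Lebesgue point result \eqref{eq:Lebesgue points} there, transfer the resulting $\mathcal H_{\overline W}$-null set to an $\mathcal H$-null set via Lemmas~\ref{lem:Hausdorff zero measure sets agree} and~\ref{lem:H and overlineH}, and globalize by Lindel\"of. The reduction and the bookkeeping are exactly right.

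There is, however, one genuine missing step, precisely at the place you flag as the main obstacle. Merely checking that the (zero) extension $\overline u$ lies in $\BV(\overline W)$ is not enough to invoke \eqref{eq:Lebesgue points} for trace existence on $\partial\Om\cap V$: at points of the jump set $S_{\overline u}$ there is no single Lebesgue value, and a priori $S_{\overline u}\cap\partial\Om\cap V$ could carry positive $\mathcal H_{\overline W}$-measure. What the paper actually proves is the stronger fact $\Vert D\overline u\Vert(\overline W)=\Vert Du\Vert(W)$, hence $\Vert D\overline u\Vert(\partial\Om\cap V)=0$; then the decomposition \eqref{eq:variation measure decomposition} forces $\mathcal H_{\overline W}(S_{\overline u}\cap\partial\Om\cap V)=0$, and only after this does \eqref{eq:Lebesgue points} yield the trace at $\mathcal H_{\overline W}$-a.e.\ boundary point. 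The mechanism is: take $u_i\in N^{1,1}(W)$ from Lemma~\ref{lem:N11 in definition of total variation}, replace them by Lipschitz functions (density of $\Lip(W)$ in $N^{1,1}(W)$ holds because $(W,d,\mu_W)$ is itself a doubling Poincar\'e space), extend each $u_i$ to $\overline W$ as a Lipschitz function, and observe that the zero extension of $g_{u_i}$ remains a minimal weak upper gradient on $\overline W$, so no variation is created on $\partial W$. For doubling and Poincar\'e on $\overline W$ the paper simply cites \cite[Propositions~3.3 and~3.6]{BB-noncomp} rather than arguing by hand.
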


\begin{proof}
	By definition, we can cover $\mathcal H$-almost all of $\partial \Om\cap \Om_{\beta}$ by
	relatively open sets $V\subset\overline{\Om}$ such that $V\subset \overline{W}$
	for open sets $W\subset \Om$ for which each $(W,d,\mu_{W})$
	satisfies doubling and $(1,1)$-Poincar\'e.
	Since $X$ is Lindel\"of and thus so are its subsets, we find a countable collection of
	these sets $\{W_j\}_{j=1}^{\infty}$ such that the corresponding sets $V_j$ cover
	$\mathcal H$-almost all of $\partial \Om\cap \Om_{\beta}$.
	Fix $V_j$ and $W_j$ and denote them $V$ and $W$.
	By Lemma \ref{lem:Hausdorff zero measure sets agree}, among subsets of $V$,
	being a null set with respect to $\mathcal H_{\overline{W}}$ implies
	being a null set with respect to $\widehat{\mathcal H}_{\overline{\Om}}$,
	and by Lemma \ref{lem:H and overlineH}, among subsets of $\Om_{\beta}$,
	the measures $\widehat{\mathcal H}_{\overline{\Om}}$ and $\mathcal H$ have the same null sets.
	Thus it is enough to prove that $Tu(x)$ exists for $\mathcal H_{\overline{W}}$-a.e.
	$x\in \partial \Om\cap V$.
	
	Denote by $\overline{u}$ the zero extension of $u$ from $W$ to $\overline{W}$.
	The function $\overline{u}$ remains measurable in the space
	$\overline{W}=(\overline{W},d,\mu_{\overline{W}})$, see \cite[Lemma 3.3.18]{HKST}.
	Clearly $\Vert \overline{u}\Vert_{L^1(\overline{W})}=\Vert u\Vert_{L^1(W)}$.
	By Lemma \ref{lem:N11 in definition of total variation}, we find a sequence of functions
	$u_i\in N^{1,1}(W)$ such that $u_i\to u$ in $L^1(W)$ and
	\[
	\Vert Du\Vert(W)=\liminf_{i\to \infty}\int_{W}g_{u_i}\,d\mu.
	\]
	Since $(W,d,\mu_W)$ satisfies doubling and $(1,1)$-Poincar\'e,
	we know that Lipschitz functions $\Lip(W)$ are dense in $N^{1,1}(W)$, see \cite[Theorem 5.1]{BB}. 
	Thus we can in fact assume that $u_i\in \Lip(W)$.
	Now we can extend every $u_i$ to a Lipschitz function on 
	$\overline{W}$, still denoted by $u_i$.
	By \cite[Lemma 5.11]{BS},
	the zero extension of $g_{u_i}$ to $\overline{W}$, still denoted by the same symbol,
	is the minimal $1$-weak upper gradient of $u_i$ in $\overline{W}$.
	Now $u_i\to \overline{u}$ in $L^1(\overline{W})$ and so
	\[
	\Vert Du\Vert(W)=\liminf_{i\to \infty}\int_{W}g_{u_i}\,d\mu
	=\liminf_{i\to \infty}\int_{\overline{W}}g_{u_i}\,d\mu_{\overline{W}}\ge \Vert D\overline{u}\Vert(\overline{W}).
	\]
	Thus we have $\overline{u}\in \BV(\overline{W})$ with
	$\Vert D\overline{u}\Vert(\overline{W})=\Vert Du\Vert(W)$, 
	so it follows that $\Vert D\overline{u}\Vert(\partial W)=0$.
	By \eqref{eq:boundaries of Omega and W} we have that $\partial W\cap V=\partial \Om\cap V$.
	Thus in total, we have
	\begin{equation}\label{eq:u BV in W closure}
	\overline{u}\in \BV(\overline{W})\quad \textrm{with}\quad \Vert D\overline{u}\Vert(\partial\Om\cap V)=0.
	\end{equation}
	By \cite[Proposition 3.3 \& 3.6]{BB-noncomp} we know that $(\overline{W},d,\mu_{\overline{W}})$
	also satisfies doubling and $(1,1)$-Poincar\'e.
	Since $\Vert D\overline{u}\Vert(\partial \Om\cap V)=0$,
	by the decomposition \eqref{eq:variation measure decomposition} we also
	know that $\mathcal H_{\overline{W}}(S_{\overline{u}}\cap\partial \Om\cap V)=0$.
	By the Lebesgue point result \eqref{eq:Lebesgue points}, for
	$\mathcal H_{\overline{W}}$-a.e. $x\in\overline{W}\setminus S_{\overline{u}}$ we have
	\[
	\lim_{r\to 0}\,\vint{\Om\cap B(x,r)}|u-\overline{u}^\vee(x)|\, d\mu
	= \lim_{r\to 0}\,\vint{B_{\overline{W}}(x,r)}|\overline{u}-\overline{u}^\vee(x)|\, d\mu_{\overline{W}}=0.
	\]
	In particular, this now holds for $\mathcal H_{\overline{W}}$-a.e. $x\in\partial \Om\cap V$.
	Thus we can choose $Tu(x):=\overline{u}^\vee(x)$.
\end{proof}

In the proof above, we used Lemma \ref{lem:H and overlineH} to compare the measures
$\widehat{\mathcal H}_{\overline{\Om}}$ and $\mathcal H$; of course it is
more natural to formulate results in terms of the latter.
In \cite{LaSh} it was noted that these two measures
have the same null sets on $\partial\Om$ if
$\Omega$ satisfies a \emph{measure density condition}, meaning that there is a constant $c_m>0$ such that
\[
	\mu(B(x,r)\cap\Omega)\ge c_m\mu(B(x,r))
\]
for all  $x\in\partial\Omega$ and $r\in (0,\diam(\Omega))$.
We wish to avoid such an assumption that of course rules out e.g. domains with exterior cusps.
Instead, we have considered the part of the boundary where
the measure density condition is satisfied in an asymptotic sense, namely $\partial\Om\cap \Om_{\beta}$.
Concerning the rest of the boundary, we first observe that the trace might not exist.

\begin{example}\label{ex:measure theoretic and strong boundary}
	Let $X=\R^2$.
	For $A\subset \R^2$ and $x\in\R^2$, we define
	\[
	A+x:=\{y+x\colon y\in A\}.
	\]
	Define $\Om\subset \R^2$ as a union of rectangles ``piled on top of each other'', as follows.
	Define
	\[
	A_j:=[0,2^{-j(j+1)}]\times [0,2^{-j^2}]+c_j
	\]
	with
	\[
	c_j:=\left(-2^{-j(j+1)-1},\,\sum_{k=1}^{j-1}2^{-k^2}\right),\quad j\in\N.
	\]
	Then let $A:=\bigcup_{j=1}^{\infty}A_j$ and $\Om:=\inte(A)$.
	Consider the point
	\[
	x_0:=\left(0,\sum_{k=1}^{\infty}2^{-k^2}\right)\in \partial\Om.
	\]
	Equip the space with the weighted Lebesgue measure
	\[
	d\mu:=w\,d\mathcal L^2,\quad\textrm{with}\quad w(x)=\frac{1}{2\pi}|x-x_0|^{-1}.
	\]
	It is straightforward to check that $w$ is a Muckenhoupt $A_1$-weight,
	and thus $\mu$ is doubling and supports a $(1,1)$-Poincar\'e inequality,
	see e.g. \cite[Chapter 15]{HKM} for these concepts.  
	We will show that $x_0\in\partial^*\Om$ but $\theta_*(\Om,x_0)=0$.
	First note that
	\[
	\mu(B(x_0,r))=\frac{1}{2\pi}\int_0^r \frac{2\pi t}{t}\,dt=r, \quad r>0.
	\]
	Denote by $Q_j$ the square of side length $2^{-j(j+1)}$ located at the top of the rectangle $A_j$.
	Note that the ball $B(x_0,2^{-j(j+1)+1})$ contains $Q_j$. Thus for every $j\in\N$,
	\begin{equation}\label{eq:suitable scales one}
	\begin{split}
	\frac{\mu(B(x_0,2^{-j(j+1)+1})\cap \Om)}{\mu(B(x_0,2^{-j(j+1)+1}))}
	&\ge \frac{\mu(Q_j)}{2^{-j(j+1)+1}}\\
	&\ge \frac{1}{2\pi}\frac{\mathcal L^2(Q_j)/2^{-j(j+1)+1}}{2^{-j(j+1)+1}}\\
	&\ge \frac{1}{2\pi}\frac{2^{-j(j+1)}\times 2^{-j(j+1)}/2^{-j(j+1)+1}}{2^{-j(j+1)+1}}
	=\frac{1}{8\pi}.
	\end{split}
	\end{equation}
	This combined with the obvious fact that
	$\theta^*(\R^2\setminus \Om,x_0)\ge \theta_*(\R^2\setminus \Om,x_0)\ge 1/2$ tells us that
	$x_0\in\partial^*\Om$.
	On the other hand, we let $S_j:=\sum_{k=j+1}^{\infty}2^{-k^2}$ and then we estimate
	\begin{equation}\label{eq:measure of Aj}
	\begin{split}
	\mu(A_j)
	&\le 2^{-j(j+1)}\times \int_{S_j}^{S_j+2^{-j^2}}t^{-1}\,dt\\
	&\le 2^{-j(j+1)}\times \log\left((S_j+2^{-j^2})/S_j\right)\\
		&\le 2^{-j(j+1)}\times \log\left(1+2^{-j^2}/2^{-(j+1)^2}\right)\\
	&\le 2^{-j(j+1)}\times \log\left(1+2^{3j}\right).
	\end{split}
	\end{equation}
	Thus for every $j\in\N$, we have
	\begin{align*}
	\frac{\mu(B(x_0,2^{-j^2})\cap \Om)}{\mu(B(x_0,2^{-j^2}))}
	&\le \frac{1}{2^{-j^2}}\sum_{k=j}^{\infty}\mu(A_k)\\
	&\le \frac{1}{2^{-j^2}}\sum_{k=j}^{\infty}2^{-k(k+1)}\times \log\left(1+2^{3k}\right)\\
	&\le \frac{2}{2^{-j^2}}2^{-j(j+1)}\times \log\left(1+2^{3j}\right)\\
	&\to 0\quad\textrm{as }j\to\infty.
	\end{align*}
	This tells us that $\theta_*(\Om,x_0)=0$, so in particular
	$x_0\in\partial^*\Om\setminus \Sigma_{\beta}\Om$.
	
	Next note that for every $x\in \partial\Om\setminus \{x_0\}$,
	we can choose $\delta>0$ so small that $B(x,\delta)$ only intersects at most two of the rectangles that make
	up $\Om$. Then $W:=B(x,\delta)\cap \Om$ is obviously
	a Lipschitz domain.
	As noted in Remark \ref{rem:locally Poincare near boundary},
	$(W,d,\mu)$ is then a metric space equipped with a doubling measure and supporting a
	$(1,1)$-Poincar\'e inequality. 
	Thus $\Om$ is PLB.
	
	Define a function
	\[
	u:=\sum_{k=1}^{\infty}(-1)^k k\ch_{A_{k}}.
	\]
	Then by \eqref{eq:measure of Aj}, we get
	\[
	\Vert u\Vert_{L^1(\Om)}
	\le \sum_{k=1}^{\infty}k\mu(\ch_{A_{k}})
	\le \sum_{k=1}^{\infty}k\cdot 2^{-k(k+1)}\times \log\left(1+2^{3k}\right)<\infty,
	\]
	and
	\[
	\Vert Du\Vert (\Om)\le \sum_{k=1}^{\infty}2k\cdot 2^{-k(k+1)}<\infty.
	\]
	Thus $u\in\BV(\Om)$. Now, for any even numbers $j\in\N$ and $k\ge j$, we have
	\begin{align*}
	\frac{\mu(B(x_0,2^{-k(k+1)+1})\cap \{u\ge j\})}{\mu(B(x_0,2^{-k(k+1)+1}))}
	\ge \frac{\mu(Q_k)}{2^{-k(k+1)+1}}\ge \frac{1}{8\pi}\quad\textrm{by }\eqref{eq:suitable scales one}.
	\end{align*}
	Thus $\theta^{*}(\{u\ge j\},x_0)\ge (8\pi)^{-1}$ for all even $j\in\N$, and so $T_*u(x_0)=\infty$.
	On the other hand, we have $\theta_*(\{u>t\},x_0)=0$
	for all $t\in\R$ since we proved that in fact $\theta_*(\Om,x_0)=0$, and so
	$T_{\beta}u(x_0)=-\infty$. Finally, clearly $Tu(x_0)$ does not exist,
	nor would it exist with any reasonable definition allowing for the possibility that $Tu(x_0)$
	take the values $\pm \infty$.
	To estimate $\mathcal H(\{x_0\})$ it is of course enough to consider coverings consisting of one ball,
	and then it is straightforward to show that
	\begin{equation}\label{eq:measure of origin}
	\frac{1}{4}\le \mathcal H(\{x_0\})\le 1.
	\end{equation}
\end{example}

This example demonstrates that there can be a significant part of the boundary,
in terms of $\mathcal H$-measure, where $\theta^*(\Om,\cdot)>0$ but $\theta_*(\Om,\cdot)=0$.
On this part, the trace $Tu$ might not exist but the rough trace $T_*u$ is of course defined.
We will see in the next sections that the values of $T_*u$ on this part of the boundary are
nonetheless controlled by the part of the boundary where $Tu$ exists.
(If there is a part of $\partial\Om$ where even $\theta^*(\Om,\cdot)=0$, there we have
$T_*u\equiv -\infty$ and so the rough trace is not interesting.)

\section{Integrability of the boundary trace}\label{sec:integrability}

In this section we study the integrability of the rough trace $T_*u$.
As usual, $\Om\subset X$ always denotes an arbitrary open set.

In a bounded Lipschitz domain $\Om\subset\R^n$, by Lusin's theorem the boundary trace $Tu$
has the following continuity: for every $\eps>0$ there exists a relatively open set $G\subset \partial\Om$
such that $\mathcal H^{n-1}(G)<\eps$ and $Tu|_{\partial\Om\setminus G}$ is continuous.
In more general domains in metric spaces,
it is difficult to obtain a similar result, already because
$\partial\Om$ could have infinite $\mathcal H$-measure.
Nonetheless, we get the following kind of continuity.

\begin{proposition}\label{prop:weak continuity}
	Let $u\in\BV(\Om)$.
	Then for $\mathcal H$-a.e. $x\in\partial \Om\setminus \Om_{\beta}$, we have that if
	$T_*u(x)>-\infty$ and $\eps>0$, then
	\[
	\mathcal H(B(x,\eps )\cap \{T_{\beta}u\ge \min\{T_{*}u(x)-\eps,1/\eps\}\})=\infty.
	\]
\end{proposition}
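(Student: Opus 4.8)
The plan is to argue by contradiction, exploiting the fact that $x\in\partial\Om\setminus\Om_\beta$ means $\theta_*(\Om,x)<\beta$, so along a sequence of scales $\Om$ has small density at $x$, and to combine this with the coarea formula together with the perimeter–strong-boundary comparison \eqref{eq:perimeter and strong boundary}. Suppose the claim fails: then there is a set $A\subset\partial\Om\setminus\Om_\beta$ with $\mathcal H(A)>0$ such that for each $x\in A$ there are $T_*u(x)>-\infty$ and $\eps_x>0$ with $\mathcal H(B(x,\eps_x)\cap\{T_\beta u\ge m_x\})<\infty$, where $m_x:=\min\{T_*u(x)-\eps_x,1/\eps_x\}$. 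By splitting $A$ according to the value of $\eps_x$ (rational, bounded below) and of a lower bound for $T_*u(x)$, and using countable subadditivity of $\mathcal H$, I may assume that there are fixed $\eps>0$ and a fixed threshold $t\in\R$ (or $t$ ranging to $+\infty$) such that for all $x$ in a positive-$\mathcal H$-measure piece $A'$ one has $T_*u(x)>t+\eps$, $\eps_x\ge\eps$, and $\mathcal H(B(x,\eps)\cap\{T_\beta u\ge t\})<\infty$; covering $A'$ by balls of radius $\eps$ and passing to one such ball $B$ where $\mathcal H(A'\cap B)>0$, I reduce to: $\mathcal H(B\cap\{T_\beta u\ge t\})<\infty$ yet $\mathcal H(A'\cap B)>0$ with $T_*u>t$ and $\theta_*(\Om,\cdot)<\beta$ on $A'\cap B$.

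The next step is to relate $T_\beta u$ to the super-level sets. By Definition~\ref{def:traces}, $T_\beta u(x)\ge t$ precisely when $\theta_*(\{u>s\},x)\ge\beta$ for all $s<t$; dually, $T_*u(x)>t$ gives some $s>t$ with $\theta^*(\{u>s\},x)>0$. Fix $s$ slightly above $t$ and set $E:=\{u>s\}$. By the coarea formula \eqref{eq:coarea} we may choose $s$ so that $P(E,\Om)<\infty$. The key point is that a boundary point $x$ where $\theta^*(E,x)>0$ but $\theta_*(\Om,x)<\beta\le 1/2$, hence also where $x\notin\Om_\beta$, must lie in $\Sigma_\beta E'$ for a suitable modification, or more precisely: I want to show that $A'\cap B$, up to an $\mathcal H$-null set, is contained in the measure-theoretic boundary $\partial^*E$ — because $\theta^*(E,x)>0$ and $\theta^*(X\setminus E,x)\ge\theta^*(\Om\setminus E,x)$; the latter is positive since $\theta_*(\Om,x)<\beta$ forces either $\Om\setminus E$ or $X\setminus\Om$ to carry positive upper density at $x$, and if it is $X\setminus\Om$ we are still in $\partial^*E$ because $X\setminus\Om\subset X\setminus E$. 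So $A'\cap B\subset\partial^*E$ modulo nothing, and then by \eqref{eq:measure theoretic and strong boundary} we have $A'\cap B\subset\Sigma_\gamma E$ up to an $\mathcal H$-null set. Since $\beta\le\gamma$, $\Sigma_\gamma E\subset\Sigma_\beta E$ — wait, inclusion goes the other way; rather $\Sigma_\gamma E\subset E_\gamma\subset E_\beta$ and $\Sigma_\gamma E\subset(X\setminus E)_\gamma\subset(X\setminus E)_\beta$, so indeed $\Sigma_\gamma E\subset\Sigma_\beta E$. On this set $\theta_*(E,\cdot)\ge\beta$, hence $\theta_*(\{u>s'\},\cdot)\ge\beta$ for $s'<s$, giving $T_\beta u\ge s'$; choosing $s'\in(t,s)$ we conclude $A'\cap B\subset\{T_\beta u\ge t\}$ up to $\mathcal H$-null. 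But $\mathcal H(A'\cap B)>0$, which is consistent so far — the contradiction must come from finiteness: we assumed $\mathcal H(B\cap\{T_\beta u\ge t\})<\infty$, so $\mathcal H(\Sigma_\gamma E\cap B)<\infty$, hence by \eqref{eq:perimeter and strong boundary} $P(E,B)<\infty$; that alone is not absurd either.

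The actual contradiction, and the main obstacle, is therefore a quantitative one: I must show that $A'\cap B$ being a positive-$\mathcal H$-measure subset of $\partial\Om\setminus\Om_\beta$ on which $T_*u>t$ forces, via the structure of $\Om$ near such points (where $\Om$ has density $<\beta$ along a subsequence of scales and $\{u>s\}$ has positive upper density), the set $\{T_\beta u\ge t\}$ — equivalently $\Sigma_\beta\{u>s''\}$ for a range of $s''$ — to have \emph{infinite} $\mathcal H$-measure in $B$; the mechanism should be that at a point with $\theta_*(\Om,x)<\beta$ one cannot have a genuine $L^1$-trace, so $T_*u(x)$ is "achieved" only through infinitely many nearby super-level-set boundaries accumulating at $x$, each contributing perimeter, and by \eqref{eq:perimeter and strong boundary} this perimeter is $\mathcal H$-measure of strong boundaries, which blows up. I expect the careful execution of this blow-up — making precise that a single point of $A'$ drags in infinite $\mathcal H$-mass of $\bigcup_{s''\le t}\Sigma_\beta\{u>s''\}$ within any neighborhood — to be the hard part, and I would handle it by a stopping-time / dyadic-scales argument on the annuli $B(x,2^{-k})\setminus B(x,2^{-k-1})$, using the relative isoperimetric inequality (Lemma~\ref{lem:N11 in definition of total variation} and the cited strong relative isoperimetric inequality) on each annulus to lower-bound the perimeter contribution, then summing. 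The remaining steps — measurability of all the sets involved (already noted for $T_*u,T_\beta u$), the countable reductions at the start, and Vitali-type covering to pass to a single ball — are routine.
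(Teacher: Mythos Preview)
Your setup and reductions are sound, and you assemble all the right ingredients: the inclusion $A'\cap B\subset\partial^*E$ (with $E=\{u>s\}$, $t<s<t+\eps$, $P(E,\Om)<\infty$); the bound $\mathcal H(\Sigma_\beta E\cap B)<\infty$ coming from $\Sigma_\beta E\cap\partial\Om\subset\{T_\beta u\ge t\}$ together with $P(E,\Om)<\infty$; and then $P(E,B)<\infty$ via Theorem~\ref{thm:new Federer}. One ordering fix: you must establish $P(E,B)<\infty$ \emph{before} invoking \eqref{eq:measure theoretic and strong boundary}, since that statement requires finite perimeter in an open set containing the boundary points in question; as written your first application of \eqref{eq:measure theoretic and strong boundary} is premature, but this is easily rearranged since your later derivation of $P(E,B)<\infty$ does not actually depend on it.

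The real gap is that you already hold the contradiction and do not recognize it. You write that $A'\cap B\subset\Sigma_\gamma E$ up to an $\mathcal H$-null set, conclude only that $A'\cap B\subset\{T_\beta u\ge t\}$, declare this ``consistent so far'', and then embark on a dyadic blow-up. But look again: every $x\in A'\cap B$ satisfies $\theta_*(E,x)\le\theta_*(\Om,x)<\beta\le\gamma$, so $x\notin E_\gamma$ and hence $x\notin\Sigma_\gamma E$. Thus $A'\cap B\cap\Sigma_\gamma E=\emptyset$, and combined with $A'\cap B\subset\partial^*E$ this gives
\[
A'\cap B\subset(\partial^*E\setminus\Sigma_\gamma E)\cap B.
\]
Since $P(E,B)<\infty$, \eqref{eq:measure theoretic and strong boundary} says the right-hand side has $\mathcal H$-measure zero, so $\mathcal H(A'\cap B)=0$, contradicting your assumption. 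No blow-up, no relative isoperimetric estimate, no annuli are needed. The paper's proof is exactly this argument, organized pointwise rather than by contradiction: it fixes a countable base of balls $B_j$ and a countable dense set of levels $q_k$ with $P(\{u>q_k\},\Om)<\infty$, defines the exceptional set $N$ as the union of $B_j\cap(\partial^*\{u>q_k\}\setminus\Sigma_\beta\{u>q_k\})$ over those $(j,k)$ with $P(\{u>q_k\},B_j)<\infty$, notes $\mathcal H(N)=0$ by \eqref{eq:measure theoretic and strong boundary}, and then for each $x\notin\Om_\beta\cup N$ obtains $\theta^*(\{u>q_k\},x)=0$ and hence $T_*u(x)\le q_k$ directly.
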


Note that we take the minimum because it can happen that $T_*u(x)=\infty$,
in which case we of course interpret $T_*u(x)-\eps=\infty$.
The above condition means that close to $x$ there is a very large subset of $\partial\Om$
where $T_{\beta}u$ is not much less than $T_{*}u(x)$ (though conversely, recall that always $T_{\beta}u\le T_*u$).

\begin{proof}
	Since $X$ and then also $\partial\Om$ is Lindel\"of, we find
	a countable collection of balls $\{B_j\}_{j=1}^{\infty}$ such that every $x\in\partial\Om$ is contained
	in some ball $B_j$ with arbitrarily small radius. 
	By the coarea formula \eqref{eq:coarea}, we can take a countable,
	dense subset $\{q_k\}_{k\in\N}$ of $\R$ such that
	\[
	P(\{u>q_k\},\Om)<\infty\quad\textrm{for all}\ k\in\N.
	\]
	For each $j,k\in\N$, if $P(\{u>q_k\},B_j)<\infty$, then define
	\[
	N_{j,k}:=B_j\cap (\partial^*\{u>q_k\}\setminus \Sigma_{\beta}\{u>q_k\}).
	\]
	Otherwise let $N_{j,k}:=\emptyset$.
	Then define the exceptional set
	\[
	N:=\bigcup_{j,k=1}^{\infty}N_{j,k}.
	\]
	We have $\mathcal H(N)=0$ by \eqref{eq:measure theoretic and strong boundary};
	recall that we assume $\beta\le \gamma$.
	
	Now fix $x\in \partial\Om\setminus (\Om_{\beta}\cup N)$.
	Suppose $T_*u(x)\in (-\infty,\infty]$ and let $\eps>0$.
	Denote $b:=\min\{T_{*}u(x)-\eps,1/\eps\}\in (-\infty,\infty)$.
	Suppose that for some fixed $\eps>0$,
	\begin{equation}\label{eq:finite measure contradiction}
		\mathcal H(B(x,\eps )\cap \{T_{\beta}u\ge b\})<\infty.
	\end{equation}
	Note that for every point $y\in\partial\Om$
	with $\theta_*(\{u> b\},y)\ge \beta$, we have $y\in\{T_{\beta}u\ge  b\}$.
	Thus
	\[
	\mathcal H(B(x,\eps)\cap\partial\Om\cap \{\theta_*\{u>b\}\ge \beta\})<\infty.
	\]
	We find $j,k\in\N$ such that $B_j\subset B(x,\eps)$ and
	\[
	b=\min\{T_{*}u(x)-\eps,1/\eps\}< q_k<T_{*}u(x).
	\]
	Then
	\[
	\mathcal H(B_j\cap \partial \Om\cap \Sigma_{\beta}\{u>q_k\})
	\le \mathcal H(B_j\cap \partial \Om\cap \{\theta_*\{u>q_k\}\ge \beta\})<\infty.
	\]
	By \eqref{eq:def of theta}, also
	\[
	\mathcal H(B_j\cap \Om\cap \Sigma_{\beta}\{u>q_k\})<\infty,
	\]
	and clearly $B_j\cap \Sigma_{\beta}\{u>q_k\}\setminus \overline{\Om}=\emptyset$.
	Then $P(\{u>q_k\},B_j)<\infty$ by Theorem \ref{thm:new Federer}.
	Since $x\notin \Om_{\beta}$, necessarily $\theta_* \{u>q_k\}<\beta$, and since $x\notin N$, in fact
	$\theta^* \{u>q_k\}=0$. This implies that $T_*u(x)\le q_k$.
	This contradicts the fact that $q_k<T_*u(x)$.
	Thus \eqref{eq:finite measure contradiction} is false, proving the claim.
\end{proof}

For nice domains we get the following version.

\begin{proposition}\label{prop:weak continuity in nice domain}
	Suppose $\Om$ is PLB and let $u\in\BV(\Om)$.
	Then for $\mathcal H$-a.e. $x\in\partial \Om\setminus \Om_{\beta}$,  we have that if
	$T_*u(x)>-\infty$ and $\eps>0$, then
	\[
	\mathcal H(B(x,\eps )\cap \Om_{\beta}\cap \{Tu\ge \min\{T_{*}u(x)-\eps,1/\eps\}\})=\infty.
	\]
\end{proposition}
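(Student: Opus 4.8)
The plan is to deduce this from Proposition~\ref{prop:weak continuity} by replacing, up to an $\mathcal H$-null set, the set $\{T_\beta u\ge b\}$ appearing there with $\Om_\beta\cap\{Tu\ge b\}$. Note that Proposition~\ref{prop:weak continuity} requires no regularity of $\Om$; the only extra ingredient needed here is the existence of the ordinary trace on $\partial\Om\cap\Om_\beta$, which is exactly what the PLB hypothesis buys us. So first I would invoke Proposition~\ref{prop:existence of trace} to fix, once and for all, the set $Z:=\{x\in\partial\Om\cap\Om_\beta\colon Tu(x)\text{ does not exist}\}$, which has $\mathcal H(Z)=0$; crucially $Z$ depends neither on the point $x$ nor on $\eps$.

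The heart of the argument is a pointwise comparison: for every \emph{finite} $b\in\R$,
\[
\{T_\beta u\ge b\}\subseteq\bigl(\Om_\beta\cap\{Tu\ge b\}\bigr)\cup Z .
\]
To see this, take $y\in\partial\Om$ with $T_\beta u(y)\ge b>-\infty$. Then the supremum defining $T_\beta u(y)$ runs over a nonempty set, so $\theta_*(\{u>t\},y)\ge\beta$ for some $t$; since $\{u>t\}\subset\Om$, monotonicity of $\theta_*$ forces $\theta_*(\Om,y)\ge\beta$, i.e.\ $y\in\Om_\beta$. If $Tu(y)$ fails to exist, then $y\in Z$; otherwise $\theta_*(\Om,y)\ge\beta>0$ and $Tu(y)$ exists, so by the remark following Definition~\ref{def:traces} we have $T_\beta u(y)=Tu(y)\ge b$, hence $y\in\Om_\beta\cap\{Tu\ge b\}$.

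With the comparison in hand the conclusion is immediate. For $x$ in the full-$\mathcal H$-measure subset of $\partial\Om\setminus\Om_\beta$ given by Proposition~\ref{prop:weak continuity}, assume $T_*u(x)>-\infty$, fix $\eps>0$, and set $b:=\min\{T_*u(x)-\eps,1/\eps\}$. Since $\eps>0$ and $T_*u(x)>-\infty$, one checks $-\infty<b\le 1/\eps<\infty$, so $b$ is finite and the comparison applies. Intersecting the displayed inclusion with $B(x,\eps)$ and using subadditivity of the outer measure $\mathcal H$ together with $\mathcal H(Z)=0$,
\[
\infty=\mathcal H\bigl(B(x,\eps)\cap\{T_\beta u\ge b\}\bigr)\le\mathcal H\bigl(B(x,\eps)\cap\Om_\beta\cap\{Tu\ge b\}\bigr),
\]
which is exactly the assertion, with the exceptional set of $x$'s unchanged from Proposition~\ref{prop:weak continuity}. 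I do not anticipate a genuine obstacle here: the only points requiring care are verifying that $b$ is finite (so that $T_\beta u(y)\ge b$ really does place $y$ in $\Om_\beta$) and that $Z$ is chosen independently of $x$ and $\eps$, both of which are routine bookkeeping rather than substantive difficulties.
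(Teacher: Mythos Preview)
Your proof is correct and follows essentially the same approach as the paper's own proof: apply Proposition~\ref{prop:weak continuity}, observe that $\{T_\beta u\ge b\}\subset\Om_\beta$ for finite $b$, and then use Proposition~\ref{prop:existence of trace} together with the remark after Definition~\ref{def:traces} to replace $T_\beta u$ by $Tu$ up to an $\mathcal H$-null set. Your version is slightly more explicit about the bookkeeping (finiteness of $b$, fixing $Z$ independently of $x$ and $\eps$), but there is no substantive difference.
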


\begin{proof}
	By Proposition \ref{prop:weak continuity},
	for $\mathcal H$-a.e. $x\in\partial \Om\setminus \Om_{\beta}$  we have that if
	$T_*u(x)>-\infty$ and $\eps>0$, then
	\[
	\mathcal H(B(x,\eps )\cap \{T_{\beta}u\ge \min\{T_{*}u(x)-\eps,1/\eps\}\})=\infty.
	\]
	Note that necessarily $\{T_{\beta}u\ge \min\{T_{*}u(x)-\eps,1/\eps\}\}\subset \Om_{\beta}$.
By Proposition \ref{prop:existence of trace}, in the set $\partial\Om\cap \Om_{\beta}$ we have
that $Tu$ exists and thus
$T_{\beta}u=Tu$ $\mathcal H$-almost everywhere in this set, completing the proof.
\end{proof}

\begin{remark}
Clearly, the continuity property of Propositions \ref{prop:weak continuity}	and
\ref{prop:weak continuity in nice domain} does not generally hold at points $x\in\partial \Om\cap \Om_{\beta}$.
A counterexample is given simply by any domain with $\mathcal H(\partial\Om)<\infty$,
for example any bounded Lipschitz domain $\Om\subset \R^n$.
\end{remark}

\begin{lemma}\label{lem:trace absolute value comparison}
Let $u$ be a $\mu$-measurable function on $\Om$. Then
\[
|T_*|u|(x)|\ge |T_*u(x)|\quad\textrm{for every }x\in\partial\Om.
\]
\end{lemma}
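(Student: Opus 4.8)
The statement compares $|T_*|u|(x)|$ with $|T_*u(x)|$ at an arbitrary boundary point $x\in\partial\Om$, and the proof should be a direct manipulation of the definitions of the rough trace together with elementary relations between the super-level sets of $u$ and of $|u|$. The key observation is that for $t\ge 0$ one has the set inclusion $\{|u|>t\}\supset\{u>t\}$ and $\{|u|>t\}\supset\{u<-t\}=\{-u>t\}$, while for $t<0$ the set $\{|u|>t\}$ is all of $\Om$. Consequently $\theta^*(\{|u|>t\},x)\ge\theta^*(\{u>t\},x)$ for every $t\in\R$, and also $\theta^*(\{|u|>t\},x)\ge\theta^*(\{-u>t\},x)$. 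Passing to the supremum over $t$ in the definition of $T_*$ then gives $T_*|u|(x)\ge T_*u(x)$ and $T_*|u|(x)\ge T_*(-u)(x)$.

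\textbf{Step 1.} First I would note that $T_*|u|(x)\ge 0$ whenever $\Om$ has positive upper density at $x$ (equivalently whenever $T_*u(x)>-\infty$), since for every $t<0$ we have $\{|u|>t\}=\Om$, so $\theta^*(\{|u|>t\},x)=\theta^*(\Om,x)>0$ and hence $T_*|u|(x)\ge 0$; in the degenerate case $\theta^*(\Om,x)=0$ both sides are $-\infty$ in absolute value, i.e. $|T_*u(x)|=|T_*|u|(x)|=\infty$ vacuously (or one simply notes $T_*u(x)=-\infty=T_*|u|(x)$), and the inequality holds trivially. So we may assume $\theta^*(\Om,x)>0$, whence $T_*|u|(x)\ge 0$.

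\textbf{Step 2.} Using the inclusions above, $T_*|u|(x)\ge T_*u(x)$ and $T_*|u|(x)\ge T_*(-u)(x)$. I would then relate $T_*(-u)(x)$ to $-T_*u(x)$: a short argument with the identity $\{-u>t\}=\Om\setminus\{u\ge -t\}$ and the fact that $\theta^*(\{u\ge -t\},x)=0$ forces $\theta^*(\{u>s\},x)=0$ for all $s>-t$ (since $\{u>s\}\subset\{u\ge -t\}$ when $s\ge -t$), combined with $\theta^*(\{-u>t\},x)=\theta^*(\Om\setminus\{u\ge -t\},x)$, yields $T_*(-u)(x)\ge -T_*u(x)$ whenever $\theta^*(\Om,x)>0$; more carefully, one checks $T_*(-u)(x)=-\operatorname{ess\,inf}$-type quantity but the one-sided inequality $T_*(-u)(x)\ge -T_*u(x)$ is all that is needed. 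Hence $T_*|u|(x)\ge\max\{T_*u(x),-T_*u(x)\}=|T_*u(x)|$. Finally, since $T_*|u|(x)\ge 0$ from Step 1, we have $|T_*|u|(x)|=T_*|u|(x)\ge|T_*u(x)|$, which is the claim.

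\textbf{Main obstacle.} The only genuinely delicate point is the bookkeeping with $\pm\infty$ and with the boundary case $\theta^*(\Om,x)=0$: one must be careful that "$\sup$ of empty set $=-\infty$" is handled consistently, that $T_*u(x)=\infty$ is permitted (as in Example~\ref{ex:bad traces}), and that the inequality $T_*(-u)(x)\ge -T_*u(x)$ is correctly derived — the reverse inequality is generally false at jump-type boundary points, so one must only use the direction needed. Beyond that, the argument is a routine density-monotonicity computation and should be no more than a paragraph in the final write-up.
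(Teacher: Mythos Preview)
Your proposal is correct and is essentially the same argument as the paper's: both reduce to the case $\theta^*(\Om,x)>0$, dispose of $T_*u(x)\ge 0$ via the inclusion $\{|u|>t\}\supset\{u>t\}$, and for $T_*u(x)=b<0$ use that $\theta^*(\{u>t\},x)=0$ for $t>b$ forces $\theta^*(\{u\le t\},x)=\theta^*(\Om,x)>0$, whence $\theta^*(\{|u|>|b|-\delta\},x)>0$. Your packaging via the auxiliary quantity $T_*(-u)$ and the inequality $T_*(-u)(x)\ge -T_*u(x)$ is just a relabeling of the paper's direct computation; note only that in your Step~2 the implication should run the other way (from $\theta^*(\{u>s\},x)=0$ to $\theta^*(\{u\ge -t\},x)=0$ via $\{u\ge -t\}\subset\{u>s\}$), but the intended argument is clear.
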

\begin{proof}
	We can assume that $\theta^{*}(\Om,x)>0$.
If $T_*u(x)\ge 0$, the claim is obvious. If $T_*u(x):=b\in (-\infty,0)$, let $\delta>0$ and note that
\[
\theta^{*}(\{u> b+\delta/2\},x)=0
\]
and so
\[
\theta^*(\{u<  b+\delta\},x)
\ge \theta^*(\{u\le   b+\delta/2\},x)
= \theta^{*}(\Om,x)>0.
\]
Thus
\[
\theta^*(\{|u|>|b|-\delta\},x)
\ge \theta^*(\{-u>|b|-\delta\},x)
= \theta^*(\{u<b+\delta\},x)>0.
\]
Thus $T_*|u|(x)\ge |b|-\delta$, and since $\delta>0$ was arbitrary,
\[
T_*|u|(x)\ge |b|=|T_*u(x)|.
\]
The case $b=-\infty$ is similar.
\end{proof}

Now we can prove the following result on the integrability of the trace; this gives Claim (2) of
Theorem \ref{thm:main theorem}.

\begin{proposition}\label{prop:integrability}
	Suppose $\Om$ is PLB and let $u\in\BV(\Om)$.
	Then
	\[
	\int_{\partial\Om\cap \Om_{\beta}}|Tu|\,d\mathcal H
	=\int_{\partial\Om\cap \{\theta^*(\Om,\cdot)>0\}}|T_{*}|u||\,d\mathcal H
	=\int_{\partial\Om\cap \{\theta^*(\Om,\cdot)>0\}}|T_{*}u|\,d\mathcal H
	\]
\end{proposition}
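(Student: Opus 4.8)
The plan is to name the three quantities $(A):=\int_{\partial\Om\cap\Om_{\beta}}|Tu|\,d\mathcal H$, $(B):=\int_{\partial\Om\cap\{\theta^{*}(\Om,\cdot)>0\}}|T_{*}|u||\,d\mathcal H$ and $(C):=\int_{\partial\Om\cap\{\theta^{*}(\Om,\cdot)>0\}}|T_{*}u|\,d\mathcal H$, and to establish the circular chain $(A)\ge(B)\ge(C)\ge(A)$. The part $(B)\ge(C)\ge(A)$ will be bookkeeping; the only real work is the inequality $(A)\ge(B)$, which I would deduce from the weak-continuity Proposition \ref{prop:weak continuity in nice domain} applied to $|u|$.

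First I would record the situation on $\partial\Om\cap\Om_{\beta}$. By Proposition \ref{prop:existence of trace} the trace $Tu(x)$ exists for $\mathcal H$-a.e.\ $x$ there, and since $\big||u|-|Tu(x)|\big|\le|u-Tu(x)|$ the defining limit also gives $T|u|(x)=|Tu(x)|$ at such points; moreover $\theta_{*}(\Om,x)\ge\beta>0$ on $\Om_{\beta}$, so by the remark following Definition \ref{def:traces} we get $T_{*}u(x)=Tu(x)$ and $T_{*}|u|(x)=T|u|(x)=|Tu(x)|$ for $\mathcal H$-a.e.\ $x\in\partial\Om\cap\Om_{\beta}$. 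I would also note that for any $t<0$ one has $\{|u|>t\}=\Om$, so $T_{*}|u|(x)\ge0$ (hence $|T_{*}|u|(x)|=T_{*}|u|(x)$) whenever $\theta^{*}(\Om,x)>0$. Feeding these facts, together with Lemma \ref{lem:trace absolute value comparison} (which gives $|T_{*}|u||\ge|T_{*}u|$ pointwise) and the inclusion $\Om_{\beta}\subset\{\theta^{*}(\Om,\cdot)>0\}$, into the trivial restriction-of-domain inequalities yields $(B)\ge(C)\ge\int_{\partial\Om\cap\Om_{\beta}}|T_{*}u|\,d\mathcal H=\int_{\partial\Om\cap\Om_{\beta}}|Tu|\,d\mathcal H=(A)$.

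Then I would prove $(A)\ge(B)$, which I may assume with $(A)<\infty$ since otherwise there is nothing to prove. I would first observe that $|u|\in\BV(\Om)$ with $\Vert D|u|\Vert(\Om)\le\Vert Du\Vert(\Om)$: for an approximating sequence $u_{i}$ of $u$ every upper gradient of $u_{i}$ is one of $|u_{i}|$, so $g_{|u_{i}|}\le g_{u_{i}}$. Splitting $\partial\Om\cap\{\theta^{*}(\Om,\cdot)>0\}$ into the disjoint pieces $\partial\Om\cap\Om_{\beta}$ and $E:=\partial\Om\cap(\{\theta^{*}(\Om,\cdot)>0\}\setminus\Om_{\beta})$, the contribution of the first piece to $(B)$ equals $\int_{\partial\Om\cap\Om_{\beta}}|Tu|\,d\mathcal H=(A)$ by the previous paragraph, so it suffices to show that $T_{*}|u|(x)=0$ for $\mathcal H$-a.e.\ $x\in E$. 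For this I would apply Proposition \ref{prop:weak continuity in nice domain} to $|u|$: since $\int_{\partial\Om\cap\Om_{\beta}}|T|u||\,d\mathcal H=(A)<\infty$ and $T_{*}|u|(x)\ge0>-\infty$ on $E$, for $\mathcal H$-a.e.\ $x\in E$ and every $\eps>0$ the set $B(x,\eps)\cap\Om_{\beta}\cap\{T|u|\ge\min\{T_{*}|u|(x)-\eps,1/\eps\}\}$ has infinite $\mathcal H$-measure; since the level set $\{T|u|\ge\cdot\}$ lies in $\partial\Om$, this set is contained in $\partial\Om\cap\Om_{\beta}$. Were some such $x$ to have $T_{*}|u|(x)>0$, then for $\eps>0$ small enough that $\min\{T_{*}|u|(x)-\eps,1/\eps\}=T_{*}|u|(x)-\eps>0$ we would get $\int_{\partial\Om\cap\Om_{\beta}}|T|u||\,d\mathcal H=\infty$, contradicting $(A)<\infty$. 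Hence $T_{*}|u|(x)=0$ for $\mathcal H$-a.e.\ $x\in E$, so the contribution of $E$ to $(B)$ vanishes, $(B)=(A)$, and combined with $(B)\ge(C)\ge(A)$ this gives $(A)=(B)=(C)$.

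I expect the main obstacle to be organizational rather than analytic: carrying out the reductions on $\partial\Om\cap\Om_{\beta}$ (existence of $Tu$ there, and the pointwise identities $T_{*}u=Tu$ and $T_{*}|u|=|Tu|$) and the sign bookkeeping that makes the hypothesis $T_{*}|u|>-\infty$ of Proposition \ref{prop:weak continuity in nice domain} automatic on $E$. All the genuine estimates are already packaged in Proposition \ref{prop:weak continuity in nice domain}, so no new analysis should be needed.
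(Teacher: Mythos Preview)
Your proof is correct and follows essentially the same strategy as the paper: both apply Proposition \ref{prop:weak continuity in nice domain} to $|u|$ to obtain the dichotomy that either the integral over $\partial\Om\cap\Om_{\beta}$ is infinite or $T_{*}|u|$ vanishes $\mathcal H$-a.e.\ on $\partial\Om\cap\{\theta^{*}(\Om,\cdot)>0\}\setminus\Om_{\beta}$, and both finish via Lemma \ref{lem:trace absolute value comparison} and the identification $T_{*}u=Tu$ on $\partial\Om\cap\Om_{\beta}$. Your circular-chain packaging $(A)\ge(B)\ge(C)\ge(A)$ is a minor reorganization of the paper's argument; just note that your phrase ``$\eps$ small enough that $\min\{T_{*}|u|(x)-\eps,1/\eps\}=T_{*}|u|(x)-\eps>0$'' needs adjusting when $T_{*}|u|(x)=\infty$ (the paper simply takes $\eps=\min\{T_{*}|u|(x)/2,1\}$, which makes the minimum positive in all cases).
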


\begin{proof}
	First note that the trace $Tu(x)$ exists at $\mathcal H$-a.e. $x\in \partial\Om\cap \Om_{\beta}$
	by Proposition \ref{prop:existence of trace}, and so the
	integral on the left-hand side is well-defined.
	It is also easy to check that whenever $Tu(x)$ exists, we have $|Tu(x)|=T|u|(x)$.
	Thus to prove the first equality, we can assume that $u\ge 0$.

	Let $N$ be the exceptional set of Proposition \ref{prop:weak continuity in nice domain}.
	Consider a point
	\[
	x\in\partial\Om\cap \{\theta^*(\Om,\cdot)>0\}\setminus (\Om_{\beta}\cup N).
	\]
	If $T_*u(x)\in (0,\infty]$, then Proposition \ref{prop:weak continuity in nice domain} with the choice $\eps=\min\{T_*u(x)/2,1\}$ gives
	\[
	\mathcal H(B(x,\eps )\cap \Om_{\beta}\cap \{Tu\ge  \eps\})=\infty.
	\]
	This means that
	\[
	\int_{\partial\Om\cap \Om_{\beta}}Tu\,d\mathcal H=\infty,
	\]
	and since $T_*u= Tu$ $\mathcal H$-almost everywhere in $\partial\Om\cap \Om_{\beta}$,
	then also
	\[
	\int_{\partial\Om\cap \{\theta^*(\Om,\cdot)>0\}}T_{*}u\,d\mathcal H=\infty,
	\]
	and so the first equality holds.
	The other option is that for every
	$x\in\partial\Om\cap \{\theta^*(\Om,\cdot)>0\}\setminus (\Om_{\beta}\cup N)$, we have
	$T_{*}u(x)=0$. Then
	\begin{align*}
	\int_{\partial\Om\cap \{\theta^*(\Om,\cdot)>0\}}T_{*}u\,d\mathcal H
	&=\int_{\partial\Om\cap (\Om_{\beta}\cup N)}T_{*}u\,d\mathcal H\\
	&=\int_{\partial\Om\cap \Om_{\beta}}T_{*}u\,d\mathcal H\quad\textrm{since }\mathcal H(N)=0\\
	&=\int_{\partial\Om\cap \Om_{\beta}}Tu\,d\mathcal H\quad\textrm{by Proposition }\ref{prop:existence of trace}.
	\end{align*}
	This completes the proof of the first equality.
	
	To prove the second equality, note that
	by Lemma \ref{lem:trace absolute value comparison} and the first equality, we have
	\[
	\int_{\partial\Om\cap \Om_{\beta}}|Tu|\,d\mathcal H\ge \int_{\partial\Om\cap \{\theta^*(\Om,\cdot)>0\}}|T_{*}u|\,d\mathcal H.
	\]
	The opposite inequality is obvious, since at $\mathcal H$-a.e.
	$x\in \partial\Om\cap \Om_{\beta}$ we have
	$Tu(x)=T_{*}u(x)$.
\end{proof}

In the setting of Example \ref{ex:measure theoretic and strong boundary},
one can for example consider various nonnegative functions $u\in \BV(\Om)$,
and find that always
\[
\int_{\partial\Om}T_{*}u\,d\mathcal H
=\int_{\partial\Om\setminus \{x_0\}}Tu\,d\mathcal H.
\]
The point is that if $T_*u(\{x_0\})>0$, such as when $u=1$ in $\Om$, then
the right-hand side takes the value $+\infty$ and so the two sides are equal.

Note that here we have been considering the part of the boundary $\partial\Om\cap \Om_{\beta}$, but in
Claim (3) of Theorem \ref{thm:main theorem} we consider a smaller set, namely the strong boundary
$\Sigma_{\beta}\Om=\Om_{\beta}\cap (X\setminus \Om)_{\beta}$. This raises the question of whether it would be possible
to formulate also Claim (2) in terms of the strong boundary. However, this is not possible.

\begin{example}\label{ex:strong boundary}
Let $X=\R^2$ equipped with the weighted Lebesgue measure
\[
d\mu:=w\,d\mathcal L^2,\quad\textrm{with}\quad w(x):=\frac{1}{2\pi}|x|^{-1}.
\]
Just as in Example \ref{ex:measure theoretic and strong boundary}, we have that
$(X,d_{\textrm{euc}},\mu)$ satisfies doubling and $(1,1)$-Poincar\'e.
Denote the origin by $0$.
Let $x_j:=(2^{-j},0)$, $r_j:=2^{-2j}$, and
\[
\Om:=B(0,1)\setminus \Bigg[\{0\}\cup \bigcup_{j=3}^{\infty}\overline{B(x_j,r_j)}\Bigg].
\]
It is straightforward to check that $\Om$ is a uniform domain;
recall the definition from \eqref{eq:uniformity}.
Thus by \cite[Theorem 4.4]{BS}, $(\Om,d_{\textrm{euc}},\mu)$ satisfies doubling and $(1,1)$-Poincar\'e.

Now clearly $\partial\Om=\partial\Om\cap \Om_{\beta}$ and $0\in \partial\Om$,
but $\theta^{*}(X\setminus \Om,0)=0$
so in particular $0\notin \Sigma_{\beta}\Om$.
Let $u:=1$ on $\Om$, so that $u\in\BV(\Om)$ and $Tu=T_*u=1$ on $\partial\Om$.
However, by \eqref{eq:measure of origin}
we have $\mathcal H(\{0\})>0$. Denoting the one-dimensional Hausdorff measure
by $\mathcal H^1$, we have
\begin{align*}
\int_{\partial\Om\cap \Om_{\beta}}|Tu|\,d\mathcal H
&=\mathcal H(\{0\})+\sum_{j=3}^{\infty}\mathcal H(\partial B_j)\\
&\le \mathcal H(\{0\})+\sum_{j=3}^{\infty}w((2^{-j-1},0))\cdot \pi\cdot \mathcal H^{1}(\partial B_j)\\
&\le \mathcal H(\{0\})+\sum_{j=3}^{\infty}\frac{1}{2\pi}\cdot 2^{j+1}\cdot \pi\cdot 2\pi \cdot 2^{-2j}<\infty.
\end{align*}
Now
\[
\int_{\Sigma_{\beta}\Om}|Tu|\,d\mathcal H
=\sum_{j=3}^{\infty}\mathcal H(\partial B_j)
<\int_{\partial\Om\cap \Om_{\beta}}|Tu|\,d\mathcal H.
\]
Thus in Claim (2) of Theorem \ref{thm:main theorem},
we cannot replace $\partial\Om\cap \Om_{\beta}$ with $\Sigma_{\beta}\Om$.
\end{example}

\section{Zero extension}\label{sec:zero extension}

In this section we give a characterization of those functions $u\in\BV(\Om)$
that can be zero extended to the whole space as BV functions.
We also study the possibility of doing this without adding any total variation.

As before, $\Om\subset X$ always denotes an arbitrary open set.

\begin{lemma}\label{lem:equality of Hausdorff measures}
Let $u$ be a measurable function on $\Om$. Let $A\subset \partial\Om$
be a Borel set. Then
\[
\mathcal H(\{\theta_*(\{u>t\},\cdot )\ge \beta\}\cap A)
=\mathcal H(\{T_{\beta}u>t\}\cap A)
\quad\textrm{for a.e. }t\in\R.
\]
\end{lemma}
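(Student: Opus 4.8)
The plan is to rewrite both sides of the identity in terms of a single monotone function of the parameter $t$ and then to invoke the elementary fact that a monotone function of a real variable coincides with its one-sided limits outside a countable set. First I would fix the Borel set $A\subset\partial\Om$ and, for $s\in\R$, abbreviate $D_s:=\{x\colon\theta_*(\{u>s\},x)\ge\beta\}$ (this is $\{u>s\}_\beta$ in the paper's notation); it is a Borel set because $x\mapsto\theta_*(\{u>s\},x)$ is Borel measurable, the same measurability fact that underlies the earlier claims that $T_*u$ and $T_{\beta}u$ are Borel. If $s\le s'$ then $\{u>s'\}\subset\{u>s\}$, hence $\theta_*(\{u>s'\},x)\le\theta_*(\{u>s\},x)$ for every $x$, so the family $\{D_s\}_{s\in\R}$ is non-increasing in $s$; consequently $h(s):=\mathcal H(D_s\cap A)\in[0,\infty]$ is a non-increasing function of $s$, and $h(t)$ is by definition exactly the left-hand side of the claimed equality.

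Next I would identify the right-hand side with the right limit $h(t^{+}):=\lim_{s\downarrow t}h(s)$. Unwinding the definition of $T_{\beta}u$ as a supremum, a point $x\in\partial\Om$ lies in $\{T_{\beta}u>t\}$ precisely when $\theta_*(\{u>s\},x)\ge\beta$ for some $s>t$, so that
\[
\{T_{\beta}u>t\}\cap A=\bigcup_{s>t}(D_s\cap A)=\bigcup_{n=1}^{\infty}\bigl(D_{t+1/n}\cap A\bigr),
\]
an increasing union of Borel, hence $\mathcal H$-measurable, sets. Continuity from below of the Borel outer measure $\mathcal H$ then gives $\mathcal H(\{T_{\beta}u>t\}\cap A)=\lim_{n\to\infty}h(t+1/n)=h(t^{+})$.

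With both sides expressed through $h$, it remains to use that a non-increasing function $h\colon\R\to[0,\infty]$ satisfies $h(t)=h(t^{+})$ for all $t$ outside a countable set. On the set where $h$ is finite this is the classical fact that a monotone function is continuous off a countable set; and whenever $\{h=\infty\}$ is nonempty it is an interval unbounded to the left (because $h$ is non-increasing), on whose interior $h$ is constantly $\infty$, so that at most its right endpoint can be an additional exceptional point. Hence, for a.e. $t\in\R$,
\[
\mathcal H(\{\theta_*(\{u>t\},\cdot)\ge\beta\}\cap A)=h(t)=h(t^{+})=\mathcal H(\{T_{\beta}u>t\}\cap A),
\]
which is the assertion.

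The step deserving the most care is this last reduction, together with the decision to argue via the monotone function $h$ rather than directly through the pairwise disjoint level sets $\{T_{\beta}u=t\}$, $t\in\R$. A direct argument of the form ``uncountably many disjoint level sets of positive $\mathcal H$-measure cannot coexist'' would require the restriction of $\mathcal H$ to $\partial\Om$ (or to $A$) to be $\sigma$-finite, which may well fail for the very irregular boundaries considered in this paper; routing through $h$ avoids this, and it also handles transparently the possibility that $h\equiv+\infty$ on a half-line. Everything else is routine: the set-theoretic identity for $\{T_{\beta}u>t\}$ is immediate from the definition of $T_{\beta}u$, and continuity from below is a standard property of the Borel measure $\mathcal H$.
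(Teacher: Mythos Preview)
Your argument is correct and is a clean variant of the paper's proof. Both proofs hinge on the same monotone structure and both split into the regimes ``$h$ finite'' and ``$h$ infinite'' via a threshold $a$, but the tactics differ. The paper first records the inclusion $\{T_{\beta}u>t\}\subset D_t$, then for $t>a$ argues directly with the pairwise disjoint level sets $\{T_{\beta}u=t\}\cap A$: since for each such $t$ these sit inside a set of finite $\mathcal H$-measure (namely $\{T_{\beta}u>s\}\cap A$ for some $a<s<t$), only countably many have positive measure, and outside those the inclusion upgrades to equality of measures. Your route instead uses the set-theoretic identity $\{T_{\beta}u>t\}=\bigcup_{s>t}D_s$ together with continuity from below of $\mathcal H$ to recognise the right-hand side as $h(t^{+})$, reducing everything to the right-continuity of a monotone function off a countable set. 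Your version trades the level-set counting for one extra measure-theoretic ingredient (continuity from below on Borel sets), and your remark about $\sigma$-finiteness is well taken, though note that the paper's level-set argument does not actually need $\sigma$-finiteness of $\mathcal H$ on $A$: the threshold $a$ is precisely what furnishes the ambient finite-measure set for each $t>a$. Either way the proof is short and both executions are of comparable length and difficulty.
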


\begin{proof}
	For every $t\in\R$ we have
	\begin{equation}\label{eq:s and t}
	\{T_{\beta}u> t\}
	\subset \{\theta_*(\{u>t\},\cdot)\ge \beta\}\cap \partial\Om,
	\end{equation}
	because if $x\in \partial\Om$ with
	$\theta_*(\{u>t\},x)<\beta$, then $T_{\beta}u(x)\le t$.
	
Choose $a\in [-\infty,\infty]$ such that $\mathcal H(\{T_{\beta}u>t\}\cap A)<\infty$ for all $t>a$ and 
$\mathcal H(\{T_{\beta}u>t\}\cap A)=\infty$ for all $t<a$.
There are at
most countably many $t>a$ for which
$\mathcal H(\{T_{\beta}u=t\}\cap A)>0$. Consider a number $t>a$ outside this countable set.
If $x\in A$ such that $T_{\beta}u(x)\neq t$ and $\theta_*(\{u>t\},x)\ge\beta$, then $T_{\beta}u(x)>t$.
Conversely if $T_{\beta}u(x)>t$, then $\theta_*(\{u>t\},x)\ge \beta$ by \eqref{eq:s and t}.
Thus
\[
\mathcal H(\{\theta_*(\{u>t\},\cdot)\ge \beta\}\cap A)
=\mathcal H(\{T_{\beta}u>t\}\cap A)
\quad\textrm{for a.e. }t>a.
\]

Then consider $t<a$. Now $\mathcal H(\{T_{\beta}u>t\}\cap A)=\infty$.
By \eqref{eq:s and t} we get
\[
\{T_{\beta}u> t\}\cap A
\subset
\{\theta_*(\{u>t\},\cdot)\ge \beta\}\cap A,
\]
and so also
\[
\mathcal H(\{\theta_*(\{u>t\},\cdot)\ge \beta\}\cap A)=\infty.
\]
Thus, the claim holds also for almost every (in fact, every) $t<a$.
\end{proof}

Now we prove the following characterization, which gives Claim (3) of Theorem \ref{thm:main theorem}.

\begin{proposition}\label{prop:extension}
Suppose $\Om$ is PLB and
let $u\in\BV(\Om)$. Then the zero extension of $u$ from $\Om$ to the whole space $X$ belongs to $\BV(X)$
if and only if
\[
\int_{\Sigma_{\beta}\Om}|Tu|\,d\mathcal H<\infty.
\]
\end{proposition}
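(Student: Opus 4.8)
The plan is to zero-extend $u$ level-set by level-set and use the coarea formula to connect $\|Du\|(X)$ of the extension to the perimeters of the extended super-level sets, which in turn are controlled via the strong boundary by \eqref{eq:perimeter and strong boundary} and Theorem~\ref{thm:new Federer}. Write $\overline u$ for the zero extension of $u$ from $\Om$ to $X$; then $\{\overline u > t\} = \{u>t\}$ (a subset of $\Om$) for $t\ge 0$ and $\{\overline u > t\} = \{u>t\}\cup(X\setminus\Om)$ for $t<0$. By the coarea formula \eqref{eq:coarea}, $\overline u\in\BV(X)$ iff $\int_{-\infty}^{\infty}P(\{\overline u>t\},X)\,dt<\infty$. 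Since $u\in\BV(\Om)$ already, $\int_{-\infty}^{\infty}P(\{u>t\},\Om)\,dt<\infty$, so the issue is purely the boundary contribution $P(\{\overline u>t\},X\setminus\overline\Om)$ plus the behavior of $P(\{\overline u>t\},\cdot)$ on $\partial\Om$; more precisely, by \eqref{eq:perimeter and strong boundary} it suffices to control $\mathcal H(\Sigma_\beta\{\overline u>t\})$ for a.e.\ $t$, since $P(\{\overline u>t\},X)\le C\,\mathcal H(\Sigma_\beta\{\overline u>t\})$ and conversely.

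The key computation is to identify $\Sigma_\beta\{\overline u>t\}$ up to $\mathcal H$-null sets. Away from $\overline\Om$ we have $\theta^*(\{\overline u>t\})=1$ for $t<0$ and $=0$ for $t\ge 0$, so for $t\ge 0$ nothing new appears outside $\Om$, while for $t<0$ the extra perimeter outside $\Om$ is already accounted for by $u\in\BV(\Om)$ together with $P(\Om,\cdot)$ being locally finite near the relevant part of the boundary (via PLB). The genuinely new contribution sits on $\partial\Om$: there, $\theta_*(\{\overline u>t\},x)\ge\beta$ combined with $\theta_*(X\setminus\{\overline u>t\},x)\ge\beta$ forces (for $t\ge0$) $x\in\Om_\beta$ (since $\{\overline u>t\}\subset\Om$) and hence, after splitting $\partial\Om$ into $\partial\Om\cap\Om_\beta$ and its complement, one shows $\mathcal H$-a.e.\ on $\partial\Om\cap\Om_\beta$ that $x\in\Sigma_\beta\{\overline u>t\}$ iff $Tu(x)>t$ roughly speaking. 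Integrating in $t$ and using Lemma~\ref{lem:equality of Hausdorff measures} (applied with $A=\partial\Om\cap\Om_\beta$, where $T_\beta u=Tu$ $\mathcal H$-a.e.\ by Proposition~\ref{prop:existence of trace}), the layer-cake / coarea bookkeeping yields
\[
\int_{-\infty}^{\infty}\mathcal H\big(\Sigma_\beta\{\overline u>t\}\cap\partial\Om\big)\,dt
\ \approx\ \int_{\partial\Om\cap\Om_\beta}|Tu|\,d\mathcal H,
\]
and since $\Sigma_\beta\{\overline u>t\}\cap\partial\Om$ differs from $\Sigma_\beta\{\overline u>t\}\cap\Sigma_\beta\Om$ only on a set whose contribution is already finite (here is where the passage from $\partial\Om\cap\Om_\beta$ to $\Sigma_\beta\Om$ enters — one needs that the part of $\partial\Om\cap\Om_\beta$ lying outside $\Sigma_\beta\Om$, i.e.\ where $\Om$ has density one, contributes finitely because there $\{\overline u>t\}$ inherits finite perimeter from $u\in\BV(\Om)$ locally), the finiteness of $\int_{\Sigma_\beta\Om}|Tu|\,d\mathcal H$ is equivalent to $\int_{-\infty}^{\infty}P(\{\overline u>t\},X)\,dt<\infty$, i.e.\ to $\overline u\in\BV(X)$.

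For the rigorous execution I would first reduce to the two cases $t\ge0$ and $t<0$ and handle them symmetrically by replacing $u$ with $-u$ where convenient. Then, locally near $\mathcal H$-a.e.\ point of $\partial\Om\cap\Om_\beta$, use the PLB sets $W,V$ and the fact established in the proof of Proposition~\ref{prop:existence of trace} that $\overline u\in\BV(\overline W)$ with $\|D\overline u\|(\partial\Om\cap V)=0$ and $Tu(x)=\overline u^\vee(x)$; combined with the decomposition \eqref{eq:variation measure decomposition} this tells us $\mathcal H_{\overline W}$-a.e.\ on $\partial\Om\cap V$ that $x\notin S_{\overline u}$, so $\{u>t\}$ has density $0$ or $1$ there except for $t$ in an at most countable set, and $x\in\Sigma_\beta\{\overline u>t\}$ (within $\partial\Om\cap V$) precisely when $\overline u^\vee(x)>t$, i.e.\ $Tu(x)>t$. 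Globalize via a countable Lindelöf covering and the comparison of Hausdorff measures from Lemmas~\ref{lem:Hausdorff zero measure sets agree}--\ref{lem:H and overlineH}. The main obstacle I expect is the careful bookkeeping at the boundary for $t<0$ and on the set where $\theta_*(\Om,\cdot)>0$ but the point is not in $\Sigma_\beta\Om$: one must show that the ``missing'' boundary pieces (where $\Om$ or its complement has density one) contribute only finitely to $\int P(\{\overline u>t\},X)\,dt$, so that they may be dropped when passing between $\partial\Om\cap\Om_\beta$ and $\Sigma_\beta\Om$ — this is exactly the phenomenon that Example~\ref{ex:strong boundary} shows is genuine, and it is where the $\BV(\Om)$ hypothesis on $u$ (rather than just integrability of the trace) must be used, via the local finite-perimeter consequences of PLB and \eqref{eq:measure theoretic and strong boundary}.
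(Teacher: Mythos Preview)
Your overall plan coincides with the paper's: reduce to nonnegative $u$ (equivalently, treat $u_\pm$ separately), apply the coarea formula together with \eqref{eq:perimeter and strong boundary} to replace $P(\{\overline u>t\},X)$ by $\mathcal H(\Sigma_\beta\{\overline u>t\})$, split this into the $\Om$-part and the $\partial\Om$-part, and then use the PLB structure and Lemma~\ref{lem:equality of Hausdorff measures} plus Cavalieri to turn the boundary integral into $\int_{\Sigma_\beta\Om}|Tu|\,d\mathcal H$.

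The gap is in the passage from $\partial\Om\cap\Om_\beta$ to $\Sigma_\beta\Om$. Your claimed characterization ``$x\in\Sigma_\beta\{\overline u>t\}$ (within $\partial\Om\cap V$) precisely when $Tu(x)>t$'' is false on $\partial\Om\cap\Om_\beta\setminus (X\setminus\Om)_\beta$. Indeed, take $t>0$ and $x$ with $\theta_*(\Om,x)\ge\beta$ but $\theta_*(X\setminus\Om,x)<\beta$. If $Tu(x)>t$ and $x\notin S_{\overline u}$ in $\overline W$, then $\{u>t\}$ has full density in $\Om$ at $x$, so $X\setminus\{\overline u>t\}=(X\setminus\Om)\cup(\Om\cap\{u\le t\})$ has lower density $\theta_*(X\setminus\Om,x)<\beta$ at $x$; hence $x\notin\Sigma_\beta\{\overline u>t\}$ even though $Tu(x)>t$. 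So your layer-cake computation would produce $\int_{\partial\Om\cap\Om_\beta}|Tu|\,d\mathcal H$, not $\int_{\Sigma_\beta\Om}|Tu|\,d\mathcal H$, and Example~\ref{ex:strong boundary} shows these can differ.

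The fix is precisely the one the paper uses, and it is \emph{not} a ``finite contribution'' argument but a \emph{null} one: from $\Vert D\overline u\Vert(\partial\Om\cap V)=0$ and coarea in $\overline W$ you get $\mathcal H_{\overline W}(\partial^*\{\overline u>t\}\cap V\cap\partial\Om)=0$ for a.e.\ $t$, hence for $\mathcal H_{\overline W}$-a.e.\ $x\in V\cap\partial\Om$ the relative density of $\{u>t\}$ in $\Om$ is $0$ or $1$. If in addition $x\notin(X\setminus\Om)_\beta$, either alternative forces $x\notin\Sigma_\beta\{\overline u>t\}$ (the computation above handles density $1$; density $0$ is immediate). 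After globalizing with Lemmas~\ref{lem:Hausdorff zero measure sets agree}--\ref{lem:H and overlineH} this gives
\[
\mathcal H\big(\Sigma_\beta\{\overline u>t\}\cap\partial\Om\cap\Om_\beta\setminus(X\setminus\Om)_\beta\big)=0\quad\textrm{for a.e. }t>0,
\]
so $\mathcal H(\Sigma_\beta\{\overline u>t\}\cap\partial\Om)=\mathcal H(\Sigma_\beta\{\overline u>t\}\cap\Sigma_\beta\Om)$ for a.e.\ $t>0$. On $\Sigma_\beta\Om$ the complement condition in $\Sigma_\beta\{\overline u>t\}$ is automatic (since $X\setminus\{\overline u>t\}\supset X\setminus\Om$), so $\Sigma_\beta\{\overline u>t\}\cap\Sigma_\beta\Om=\{\theta_*(\{u>t\},\cdot)\ge\beta\}\cap\Sigma_\beta\Om$, and now Lemma~\ref{lem:equality of Hausdorff measures} and Cavalieri finish the job. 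Drop the ``contributes finitely'' heuristic; it is both unnecessary and unjustified as stated.
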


\begin{proof}
For numbers $a,b\ge 0$, we write $a\approx b$ if $C^{-1} a\le b\le Ca$ for some constant $C\ge 1$
depending only on the doubling constant of $\mu$ and the constants in the Poincar\'e inequality
(that is, the doubling and Poincar\'e that hold globally in the space $X$).
Denote by $u$ also the zero extension; obviously $u\in L^1(X)$.
First assume that $u\ge 0$. Now by the coarea formula \eqref{eq:coarea}, we get the following;
note that we use upper integrals  since measurability is not clear,
and this is also the reason for the second ``$\approx$'':
\begin{equation}\label{eq:Du X calculation start}
\begin{split}
\Vert Du\Vert(X)
&=\int_{(-\infty,\infty)}^*P(\{u>t\},X)\,dt\\
&=\int_{(0,\infty)}^* P(\{u>t\},X)\,dt\\
&\approx \int_{(0,\infty)}^*\mathcal H(\Sigma_{\beta}\{u>t\})\,dt\quad\textrm{by }\eqref{eq:perimeter and strong boundary}\\
&\approx\int_{(0,\infty)}^*\mathcal H(\Sigma_{\beta}\{u>t\}\cap \Om)\,dt
+\int_{(0,\infty)}^*\mathcal H(\Sigma_{\beta}\{u>t\}\cap \partial\Om)\,dt\\
&\approx\int_{(0,\infty)}^*P(\{u>t\}, \Om)\,dt
+\int_{(0,\infty)}^*\mathcal H(\Sigma_{\beta}\{u>t\}\cap \partial\Om)\,dt\quad\textrm{by }\eqref{eq:perimeter and strong boundary}\\
&=\Vert Du\Vert(\Om)
+\int_{(0,\infty)}^*\mathcal H(\Sigma_{\beta}\{u>t\}\cap \partial\Om\cap \Om_{\beta})\,dt;
\end{split}
\end{equation}
the last inequality follows from the coarea formula \eqref{eq:coarea} and since obviously $\Sigma_{\beta}\{u>t\}\cap\partial\Om\subset \Om_{\beta}$.
Now let $V$ and $W$ be two sets as in Definition \ref{def:local conditions on boundary}.
By \cite[Proposition 3.3 \& 3.6]{BB-noncomp}, we have that
$(\overline{W},d,\mu_{\overline{W}})$ satisfies doubling and $(1,1)$-Poincar\'e.
Denote by $\overline{u}$ the zero extension of $u$ from $W$ to $\overline{W}$.
By the coarea formula \eqref{eq:coarea} and \eqref{eq:u BV in W closure}, we have
\begin{align*}
\int_{(0,\infty)}^*\mathcal H_{\overline{W}}(\partial^*\{\overline{u}>t\}\cap V \cap \partial\Om)\,dt
\approx \Vert D\overline{u}\Vert(V\cap \partial\Om)=0.
\end{align*}
Thus for a.e. $t>0$ we have
\[
\mathcal H_{\overline{W}}(\partial^*\{\overline{u}>t\}\cap V \cap \partial\Om)=0.
\]
Fix such $t$.
For $\mathcal H_{\overline{W}}$-a.e. $x\in V \cap \partial\Om$,
we have $x\notin\partial^*\{\overline{u}>t\}$ and so either
\[
\lim_{r\to 0}\frac{\mu(B(x,r)\cap W\cap \{u>t\})}{\mu(B(x,r)\cap W)}=0\quad\textrm{or}\quad
\lim_{r\to 0}\frac{\mu(B(x,r)\cap W\cap \{u\le t\})}{\mu(B(x,r)\cap W)}=0.
\]
For small enough $r>0$ we have $B(x,r)\cap \Om\subset V\cap\Om\subset W$.
Thus
\[
\lim_{r\to 0}\frac{\mu(B(x,r)\cap \Om\cap \{u>t\})}{\mu(B(x,r))}=0\quad\textrm{or}\quad
\lim_{r\to 0}\frac{\mu(B(x,r)\cap \Om\cap \{u\le t\})}{\mu(B(x,r))}=0.
\]
Now if $x\notin (X\setminus\Om)_{\beta}$, then either
\[
\liminf_{r\to 0}\frac{\mu(B(x,r)\cap \{u>t\})}{\mu(B(x,r))}<\beta\quad\textrm{or}\quad
\liminf_{r\to 0}\frac{\mu(B(x,r)\cap \{u\le t\})}{\mu(B(x,r))}<\beta,
\]
and so $x\notin \Sigma_{\beta}\{u>t\}$.
Thus for a.e. $t>0$ we have
\[
\mathcal H_{\overline{W}}(\Sigma_{\beta}\{u>t\}\cap V \cap \partial\Om\setminus (X\setminus\Om)_{\beta})=0.
\]
By Lemma \ref{lem:Hausdorff zero measure sets agree} and Lemma \ref{lem:H and overlineH}, we have in fact
\[
\mathcal H(\Sigma_{\beta}\{u>t\}\cap V \cap \partial\Om\cap\Om_{\beta}\setminus (X\setminus\Om)_{\beta})=0.
\]
As noted in the first paragraph of the proof of
Proposition \ref{prop:existence of trace}, we can cover $\mathcal H$-almost all of
$\partial\Om\cap \Om_{\beta}$ by countably many such sets $V$.
It follows that 
\[
\mathcal H(\Sigma_{\beta}\{u>t\}\cap \partial\Om\cap\Om_{\beta}\setminus (X\setminus\Om)_{\beta})=0
\quad\textrm{for a.e. }t>0.
\]
From \eqref{eq:Du X calculation start} we now get
\[
	\Vert Du\Vert(X)
	\approx \Vert Du\Vert(\Om)
	+\int_{(0,\infty)}^*\mathcal H(\Sigma_{\beta}\{u>t\}\cap \Sigma_{\beta}\Om)\,dt.
\]
Here
\begin{align*}
\int_{(0,\infty)}^*\mathcal H(\Sigma_{\beta}\{u>t\}\cap \Sigma_{\beta}\Om)\,dt
&=\int_{0}^{\infty}\mathcal H(\{\theta_*(\{u>t\},\cdot )\ge \beta\}\cap \Sigma_{\beta}\Om)\,dt\\
&=\int_{0}^{\infty}\mathcal H(\{T_{\beta}u>t\}\cap\Sigma_{\beta}\Om)\,dt\quad\textrm{by Lemma }\ref{lem:equality of Hausdorff measures}\\
&=\int_{\Sigma_{\beta}\Om}T_{\beta}u\,d\mathcal H\quad\textrm{by Cavalieri's principle}\\
&=\int_{\Sigma_{\beta}\Om}Tu\,d\mathcal H\quad\textrm{by Proposition }\ref{prop:existence of trace}.
\end{align*}
In total,
\begin{equation}\label{eq:Du X and Omega for u positive}
\Vert Du\Vert(X) \approx \Vert Du\Vert(\Om)+\int_{\Sigma_{\beta}\Om}Tu\,d\mathcal H.
\end{equation}

Now we drop the assumption $u\ge 0$.
For a general $u\in\BV(\Om)$, note first that whenever
the trace $Tu(x)$ exists at a point $x\in\partial\Om$, then we have
\[
\textrm{either }\ Tu(x)=Tu_+(x)\textrm{ and }Tu_-(x)=0, \ \textrm{ or }\ Tu(x)=-Tu_-(x)\textrm{ and }Tu_+(x)=0.
\]
In both cases we get
\begin{equation}\label{eq:Tu positive and negative part}
|Tu(x)|=Tu_+(x)+Tu_-(x).
\end{equation}
Using the coarea formula \eqref{eq:coarea}, it is easy to check that
$\Vert Du\Vert(X)=\Vert Du_+\Vert(X)+\Vert Du_-\Vert(X)$, and similarly with $X$ replaced by $\Om$.
Now we have
\begin{align*}
	\Vert Du\Vert(X)
	&=\Vert Du_+\Vert(X)+\Vert Du_-\Vert(X)\\
	&\approx\Vert Du_+\Vert(\Om)+\int_{\Sigma_{\beta}\Om}Tu_+\,d\mathcal H
	+\Vert Du_-\Vert(\Om)+\int_{\Sigma_{\beta}\Om}Tu_-\,d\mathcal H\quad\textrm{by }\eqref{eq:Du X and Omega for u positive}\\
	&=\Vert Du\Vert(\Om)+\int_{\Sigma_{\beta}\Om}|Tu|\,d\mathcal H\quad\textrm{by }\eqref{eq:Tu positive and negative part}.
\end{align*}
This proves the result.
\end{proof}

Now we have proved all the claims of our main Theorem \ref{thm:main theorem}:

\begin{proof}[Proof of Theorem \ref{thm:main theorem}]
Claim (1) is given by Proposition \ref{prop:existence of trace}, Claim (2) by
Proposition \ref{prop:integrability},
and Claim (3) by Proposition \ref{prop:extension}.
\end{proof}

\begin{remark}	
	In essence, the strategy of our proofs was the following:
	we divided the boundary $\partial\Om$
	into two parts, $\partial\Om\cap \Om_{\beta}$ and $\partial\Om\setminus \Om_{\beta}$.
	In the first part, we were able to show the existence of the (ordinary) trace.
	In the second part, we could control the relevant quantities by using the new version of Federer's characterization,
	Theorem \ref{thm:new Federer}.
\end{remark}

Consider a function $u\in N_0^{1,p}(\Om)$; recall the definition from \eqref{eq:definition of N011}.
Interpreting $u$ to be defined on the whole space,
we have $g_u=0$ $\mu$-almost everywhere in $X\setminus \Om$,
and $\Vert u\Vert_{N^{1,p}(\Om)}=\Vert u\Vert_{N^{1,p}(X)}$;
see \cite[Corollary 2.21 \& Proposition 2.38]{BB}.
In this sense, for Newton-Sobolev functions with zero boundary values,
zero extension to the whole space adds no energy.
In this section we have considered the possibility of zero extending BV functions to the whole space, but possibly
adding total variation on the boundary $\partial\Om$.
For example, if $u=\ch_{\Om}=\ch_{B(0,1)}$ in $\R^n$, zero extension adds
total variation on the boundary $\partial\Om$.
Now we study the possibility of extending without adding any total variation.
Recall the definition of $u^{\vee}$ from \eqref{eq:upper approximate limit}.

\begin{theorem}\label{thm:BV zero class characterization}
Let $\Om\subset \Om_0\subset X$ be open sets.
Let $u\in\BV(\Om)$. Define the zero extension
\[
u_0:=
\begin{cases}
u &\textrm{in }\Om,\\
0 &\textrm{in }\Om_0\setminus \Om.
\end{cases}
\] Then
the following are equivalent:
\begin{enumerate}[(1)]
	\item $u_0\in\BV(\Om_0)$ with $\Vert Du_0\Vert(\Om_0\setminus \Om)=0$ and $u_0^{\vee}(x)=0$
	for $\mathcal H$-a.e. $x\in\Om_0\cap \partial\Om$.
	\item For $\mathcal H$-a.e. $x\in \Om_0\cap \partial \Om$, we have
	\[
	\lim_{r\to 0}\frac{1}{\mu(B(x,r))}\int_{B(x,r)\cap\Om}|u|\,d\mu=0.
	\]
	\item For $\mathcal H$-a.e. $x\in \Om_0\cap \partial \Om$, we have
	\begin{equation}\label{eq:Lebesgue point type condition}
	\liminf_{r\to 0}\frac{1}{\mu(B(x,r))}\int_{B(x,r)\cap\Om}|u|\,d\mu=0.
	\end{equation}
\end{enumerate}
\end{theorem}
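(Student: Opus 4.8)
The plan is to prove the cycle $(1)\Rightarrow(2)\Rightarrow(3)\Rightarrow(1)$. The implication $(2)\Rightarrow(3)$ is immediate. For $(1)\Rightarrow(2)$ I would simply apply the BV Lebesgue point theorem \eqref{eq:Lebesgue points} to $u_0\in\BV(\Om_0)$: at $\mathcal H$-a.e.\ $x\in\Om_0$ we have $\lim_{r\to0}\vint{B(x,r)}|u_0-u_0^{\vee}(x)|\,d\mu=0$, and at $\mathcal H$-a.e.\ $x\in\Om_0\cap\partial\Om$ we additionally have $u_0^{\vee}(x)=0$ by (1), so $\lim_{r\to0}\vint{B(x,r)}|u_0|\,d\mu=0$. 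For small $r$ the ball $B(x,r)$ lies in $\Om_0$ and $u_0$ vanishes on $\Om_0\setminus\Om$, so $\vint{B(x,r)}|u_0|\,d\mu=\mu(B(x,r))^{-1}\int_{B(x,r)\cap\Om}|u|\,d\mu$, which is exactly (2).

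The substance is $(3)\Rightarrow(1)$. As in Proposition \ref{prop:extension} I would first reduce to the case $u\ge0$: since $|u|=u_++u_-$ with both summands nonnegative, (3) for $u$ forces (3) for $u_+$ and for $u_-$ with the same exceptional set, and once $(u_{\pm})_0\in\BV(\Om_0)$ with the required properties one obtains them for $u_0=(u_+)_0-(u_-)_0$ by subadditivity of the total variation (and by the Lebesgue point theorem once more for the $u_0^\vee$ statement). So assume $u\ge0$ and let $N\subset\Om_0\cap\partial\Om$ be the $\mathcal H$-null set where \eqref{eq:Lebesgue point type condition} fails. The crucial pointwise fact is: for $x\in(\Om_0\cap\partial\Om)\setminus N$ and every $t>0$, Chebyshev's inequality gives $\mu(B(x,r)\cap\{u>t\})\le t^{-1}\int_{B(x,r)\cap\Om}|u|\,d\mu$, hence $\theta_*(\{u>t\},x)=0<\beta$ and so $x\notin\Sigma_{\beta}\{u>t\}$. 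Thus $\Sigma_{\beta}\{u>t\}\cap\Om_0\cap\partial\Om\subset N$ is $\mathcal H$-null for every $t>0$; moreover $\{u>t\}\subset\Om$ has density zero on the open set $\Om_0\setminus\overline\Om$, so $\Sigma_{\beta}\{u>t\}\cap(\Om_0\setminus\overline\Om)=\emptyset$. Since $\Om_0$ is the disjoint union of $\Om$, $\Om_0\cap\partial\Om$ and $\Om_0\setminus\overline\Om$, this gives $\mathcal H(\Sigma_{\beta}\{u>t\}\cap\Om_0)=\mathcal H(\Sigma_{\beta}\{u>t\}\cap\Om)$ for all $t>0$.

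Next I would combine this with the perimeter/strong-boundary comparison \eqref{eq:perimeter and strong boundary} (valid for any open set, even when the sides are infinite) and the coarea formula \eqref{eq:coarea}, working with upper integrals exactly as in Proposition \ref{prop:extension} (since measurability in $t$ is not a priori clear), writing $\approx$ for comparison up to constants depending only on the structural constants:
\begin{align*}
\Vert Du_0\Vert(\Om_0)
&=\int_{(0,\infty)}^{*}P(\{u>t\},\Om_0)\,dt
\approx \int_{(0,\infty)}^{*}\mathcal H(\Sigma_{\beta}\{u>t\}\cap\Om)\,dt\\
&\approx \int_{(0,\infty)}^{*}P(\{u>t\},\Om)\,dt
=\Vert Du\Vert(\Om)<\infty,
\end{align*}
where I used $\{u_0>t\}=\{u>t\}$ for $t\ge0$ and $P(\{u_0>t\},\Om_0)=0$ for $t<0$. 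In particular $u_0\in\BV(\Om_0)$ (and clearly $u_0\in L^1(\Om_0)$). Now $\Vert Du_0\Vert(\cdot)$ is a Borel measure on $\Om_0$ and, by coarea, $P(\{u>t\},\Om_0)<\infty$ for a.e.\ $t>0$; for such $t$, \eqref{eq:def of theta} on $\Om_0$ gives $P(\{u>t\},\Om_0\cap\partial\Om)=\int_{\Sigma_{\gamma}\{u>t\}\cap\Om_0\cap\partial\Om}\theta_{\{u>t\}}\,d\mathcal H=0$, because $\Sigma_{\gamma}\{u>t\}\subset\Sigma_{\beta}\{u>t\}$ (as $\beta\le\gamma$) and the latter is $\mathcal H$-null on $\Om_0\cap\partial\Om$, while $P(\{u>t\},\Om_0\setminus\overline\Om)=0$ trivially. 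Hence $P(\{u>t\},\Om_0)=P(\{u>t\},\Om)$ for a.e.\ $t>0$, and integrating in $t$ over $(0,\infty)$ together with $\Vert Du_0\Vert(\Om)=\Vert Du\Vert(\Om)$ yields $\Vert Du_0\Vert(\Om_0)=\Vert Du_0\Vert(\Om)$, i.e.\ $\Vert Du_0\Vert(\Om_0\setminus\Om)=0$. Finally, for the condition on $u_0^{\vee}$: by \eqref{eq:Lebesgue points} applied to $u_0\in\BV(\Om_0)$, at $\mathcal H$-a.e.\ $x\in\Om_0$ the limit $\lim_{r\to0}\vint{B(x,r)}|u_0|\,d\mu$ exists and equals $|u_0^{\vee}(x)|$; for $x\in(\Om_0\cap\partial\Om)\setminus N$ this average equals $\mu(B(x,r))^{-1}\int_{B(x,r)\cap\Om}|u|\,d\mu$, whose $\liminf$ is $0$ by (3), so the limit is $0$ and $u_0^{\vee}(x)=0$.

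I expect the main obstacle to be precisely the mismatch between the mere $\liminf$ in hypothesis (3) and the genuine-limit conclusion $u_0^{\vee}=0$ required in (1). The resolution is the two-step bootstrap above: use (3) — via Chebyshev and \eqref{eq:perimeter and strong boundary} — only to place $u_0$ in $\BV(\Om_0)$ first, and then invoke the BV Lebesgue point theorem \eqref{eq:Lebesgue points}, which forces $\lim_{r\to0}\vint{B(x,r)}|u_0|\,d\mu$ to exist $\mathcal H$-a.e.\ and hence to coincide with the vanishing $\liminf$. A secondary technical nuisance, handled exactly as in Proposition \ref{prop:extension}, is the unclear measurability in $t$ of quantities such as $t\mapsto\mathcal H(\Sigma_{\beta}\{u>t\}\cap\Om_0)$, which is why the coarea manipulations must be phrased with upper integrals.
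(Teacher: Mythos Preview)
Your proof is correct and follows essentially the same route as the paper: Chebyshev to show that points satisfying \eqref{eq:Lebesgue point type condition} lie outside $\Sigma_\beta\{u_0>t\}$, then the new Federer characterization (which you invoke packaged as \eqref{eq:perimeter and strong boundary}) together with coarea to get $u_0\in\BV(\Om_0)$ and $\Vert Du_0\Vert(\Om_0\setminus\Om)=0$. The differences are cosmetic: you reduce to $u\ge 0$ first, while the paper treats $t>0$ and $t<0$ separately without this reduction; and for the conclusion $u_0^{\vee}=0$ you use a Lebesgue-point bootstrap, whereas the paper reads it off the density estimates once $\mathcal H(\partial^*\{u_0>t\}\cap\Om_0\setminus\Om)=0$ is known.

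One caveat worth making explicit in your write-up: the Lebesgue point result \eqref{eq:Lebesgue points} genuinely fails on $S_{u_0}$, which can have positive $\mathcal H$-measure, so your bootstrap step (and likewise your argument for $(1)\Rightarrow(2)$) needs the observation that $\Vert Du_0\Vert(\Om_0\setminus\Om)=0$ forces $\mathcal H(S_{u_0}\cap\Om_0\cap\partial\Om)=0$ via the decomposition \eqref{eq:variation measure decomposition}. You already have this in hand at the relevant moment, so the fix is one line; the paper's own proof of $(1)\Rightarrow(2)$ inserts exactly this step.
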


\begin{proof}
	$(1)\Rightarrow (2)$: By the decomposition \eqref{eq:variation measure decomposition} we know that
	$\mathcal H(S_{u_0}\cap\Om_0\setminus \Om)=0$.
	Then the Lebesgue point result \eqref{eq:Lebesgue points} gives for
	$\mathcal H$-a.e. $x\in \Om_0\setminus\Om$, and in particular for $\mathcal H$-a.e. $x\in \Om_0\cap \partial \Om$, that
	\[
	\lim_{r\to 0}\frac{1}{\mu(B(x,r))}\int_{B(x,r)\cap\Om}|u|\,d\mu
	=\lim_{r\to 0}\frac{1}{\mu(B(x,r))}\int_{B(x,r)}|u_0-u_0^{\vee}(x)|\,d\mu=0.
	\]
	
	$(2)\Rightarrow (3)$: This is obvious.
	
	$(3)\Rightarrow (1)$: Fix $x\in\Om_0\cap \partial\Om$ such 
	that \eqref{eq:Lebesgue point type condition} holds.
	If $t<0$, then
	\begin{equation}\label{eq:t larger than zero density}
	\begin{split}
		\liminf_{r\to 0}\frac{\mu(B(x,r)\setminus \{u_0>t\})}{\mu(B(x,r))}
		&=\liminf_{r\to 0}\frac{\mu(B(x,r)\cap \Om\setminus \{u>t\})}{\mu(B(x,r))}\\
		&\le \liminf_{r\to 0} \frac{1}{|t|\mu(B(x,r))}\int_{B(x,r)\cap \Om}|u|\,d\mu=0.
	\end{split}
	\end{equation}
	If $t>0$, then
	\begin{equation}\label{eq:t smaller than zero density}
	\begin{split}
		\liminf_{r\to 0}\frac{\mu(B(x,r)\cap \{u_0>t\})}{\mu(B(x,r))}
		&=\liminf_{r\to 0}\frac{\mu(B(x,r)\cap \Om\cap \{u>t\})}{\mu(B(x,r))}\\
		&\le \liminf_{r\to 0} \frac{1}{|t|\mu(B(x,r))}\int_{B(x,r)\cap \Om}|u|\,d\mu=0.
	\end{split}
	\end{equation}
	In both cases it follows that $x\notin \Sigma_{\beta}\{u_0>t\}$.
	Clearly this is true also for
	every $x\in \Om_0\setminus \overline{\Om}$.
	In conclusion, $\mathcal H(\Sigma_{\beta}\{u_0>t\}\cap \Om_0\setminus \Om)=0$
	for every $t\neq 0$.
	By the coarea formula \eqref{eq:coarea} we know that
	\[
	P(\{u_0>t\},\Om)=P(\{u>t\},\Om)<\infty\quad\textrm{for a.e. }t\in\R. 
	\]
	For such $t\neq 0$, by \eqref{eq:def of theta} we now have
	\[
	\mathcal H(\Sigma_{\beta}\{u_0>t\}\cap \Om_0)
	=\mathcal H(\Sigma_{\beta}\{u_0>t\}\cap \Om)
	\le \alpha^{-1} P(\{u_0>t\},\Om)<\infty.
	\]
	By Theorem \ref{thm:new Federer} it follows that $P(\{u_0>t\},\Om_0)<\infty$.
	Since $\mathcal H(\partial^*\{u_0>t\}\cap \Om_0\setminus \Om)=0$,
	by \eqref{eq:def of theta} we have $P(\{u_0>t\},\Om_0\setminus \Om)=0$.
	Again by the coarea formula,
	\[
	\Vert Du_0\Vert(\Om_0)=\int_{-\infty}^\infty P(\{u_0>t\},\Om_0)\, dt
	=\int_{-\infty}^\infty P(\{u_0>t\},\Om)\, dt
	=\Vert Du_0\Vert(\Om).
	\]
	It follows that $u_0\in \BV(\Om_0)$ with $\Vert Du_0\Vert(\Om_0\setminus \Om)=0$.
	From \eqref{eq:t larger than zero density} and \eqref{eq:t smaller than zero density}
	we also get $u_0^{\vee}(x)=0$
	for $\mathcal H$-a.e. $x\in\Om_0\cap \partial\Om$.
\end{proof}

Conditions (1) and (2) of the above theorem were shown to be equivalent already in \cite[Theorem 6.1]{LaSh}
as well as in \cite[Theorem 4.5]{L-Leib}.
By exploiting the new version of Federer's characterization
(Theorem \ref{thm:new Federer}), we have been able to prove that (3) is equivalent as well. 
The proofs given in the above references as well as in Theorem \ref{thm:BV zero class characterization}
follow along the lines of \cite{KKST12},
where a characterization of Newton-Sobolev functions with zero boundary values was proved.
We close by giving the following new version of this characterization.
Recall the definition of the $p$-capacity from \eqref{eq:definition of capacity}.

\begin{theorem}
	Suppose $\Om\subset X$ is open and bounded.
	Let $1\le p<\infty$ and let $u\in N^{1,p}(\Om)$.
	Then the following are equivalent:
	\begin{enumerate}
		\item $u\in N^{1,p}_0(\Om)$.
		\item For $\capa_p$-almost every $x\in \partial \Om$, we have
		\[
		\lim_{r\to 0}\frac{1}{\mu(B(x,r))}\int_{B(x,r)\cap\Om}|u|\,d\mu=0.
		\]
		\item For $\capa_p$-almost every $x\in \partial \Om$, we have
		\begin{equation}\label{eq:zero boundary liminf condition}
		\liminf_{r\to 0}\frac{1}{\mu(B(x,r))}\int_{B(x,r)\cap\Om}|u|\,d\mu=0.
		\end{equation}
	\end{enumerate}
\end{theorem}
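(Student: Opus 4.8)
The equivalence $(1)\Leftrightarrow(2)$ is the Newtonian zero-boundary-values characterization of \cite{KKST12}, and $(2)\Rightarrow(3)$ is trivial, so the substance is $(3)\Rightarrow(1)$, with the only genuinely new point being that the weaker $\liminf$ condition still suffices. I would arrange the argument as a cycle $(1)\Rightarrow(2)\Rightarrow(3)\Rightarrow(1)$. For $(1)\Rightarrow(2)$: since $\Om$ is bounded, $N_0^{1,p}(\Om)=N_0^{1,p}(\Om,X)$ and, by the remarks following Theorem \ref{thm:BV zero class characterization}, the zero extension $u_0$ of $u$ lies in $N^{1,p}(X)$ with the same norm. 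Newton--Sobolev functions are $\capa_p$-quasicontinuous, a quasicontinuous function vanishing $\mu$-a.e. on the closed set $X\setminus\Om$ vanishes $\capa_p$-quasi-everywhere there, and $\capa_p$-a.e.\ point of $X$ is a Lebesgue point of $u_0$ (the refined Lebesgue point theorem for Newtonian functions). Combining these, $\lim_{r\to0}\vint{B(x,r)}|u_0|\,d\mu=0$ for $\capa_p$-a.e.\ $x\in X\setminus\Om$, in particular for $\capa_p$-a.e.\ $x\in\partial\Om$; since $u_0=0$ off $\Om$ and $u_0=u$ in $\Om$ this is precisely $(2)$, and $(2)\Rightarrow(3)$ is immediate.

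For $(3)\Rightarrow(1)$ I would first reduce to $0\le u\le M$. Indeed $N_0^{1,p}(\Om)$ is a closed subspace of $N^{1,p}(\Om)$ (see \cite{BB}), the truncations $u_M:=\min\{u_\pm,M\}$ satisfy $(3)$ because $|u_M|\le|u|$, and $u_M\to u_\pm$ in $N^{1,p}(\Om)$ as $M\to\infty$ by dominated convergence (using $u\in L^p(\Om)$ and $g_{u_M}\le g_u\in L^p(\Om)$); since $N_0^{1,p}(\Om)$ is a linear space, it therefore suffices to show each $u_M$ lies in it. Assuming then $u$ bounded and nonnegative, I would run the cutoff construction of \cite{KKST12}: for fixed $\eps>0$ choose, for $\capa_p$-a.e.\ $x\in\partial\Om$, a radius $r(x)<\eps$ for which $\vint{B(x,r(x))\cap\Om}|u|\,d\mu$ is small (available from the $\liminf$ in $(3)$), and observe that boundedness of $u$ then also makes $\vint{B(x,r(x))\cap\Om}|u|^p\,d\mu$ small. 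These balls form a fine cover of $\partial\Om$ outside a $\capa_p$-null set; a basic $5r$-covering argument (valid for fine covers in doubling spaces) extracts a countable subfamily whose dilates cover $\partial\Om$, form an open set $G_\eps\supset\partial\Om$ of small measure, and support a Lipschitz cutoff $\eta_\eps$ that is $0$ on a neighborhood of $\partial\Om$, equals $1$ off $G_\eps$, and satisfies $g_{\eta_\eps}\lesssim 1/r(x)$ on each good ball. Then $\eta_\eps u_0$ is compactly supported in $\Om$, hence lies in $N_0^{1,p}(\Om)$, and by the Leibniz estimate
\[
\|g_{\eta_\eps u-u}\|_{L^p(\Om)}^p\lesssim \int_{G_\eps\cap\Om}g_u^p\,d\mu+\int_{\Om}|u|^p g_{\eta_\eps}^p\,d\mu,
\]
the first term goes to $0$ with $\mu(G_\eps\cap\Om)$, and the second, after summing the bound $\int_{B(x,r(x))}|u|^p g_{\eta_\eps}^p\,d\mu\lesssim r(x)^{-p}\mu(B(x,r(x)))\,\vint{B(x,r(x))\cap\Om}|u|^p\,d\mu$ over the subfamily, is controlled by the covering total times the supremum of the small averages, hence is small. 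Thus $\eta_\eps u\to u$ in $N^{1,p}(\Om)$, and closedness of $N_0^{1,p}(\Om)$ finishes the proof.

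The main obstacle is exactly the quantitative bookkeeping in this last step: converting the pointwise $\liminf$-smallness of the averages into a uniform, summable bound for $\int|u|^p g_{\eta_\eps}^p\,d\mu$. This is the mechanism of \cite{KKST12}; the only adaptation is that, where their argument uses balls of all sufficiently small radii about each boundary point, one instead applies a Vitali-type covering theorem to the fine cover produced by the single good sequence of radii coming from the $\liminf$ — a change that affects nothing essential. I would present this self-contained argument as the primary proof.

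Finally, I would record a shorter alternative route to $(3)\Rightarrow(1)$, granting that a $\BV$ function whose variation measure is absolutely continuous has its minimal $1$-weak upper gradient controlled by the density: since $\Om$ is bounded, $u\in\BV(\Om)$; $\capa_p$-null subsets of a bounded set are $\capa_1$-null (truncate admissible functions to a fixed large ball and use H\"older), hence $\mathcal H$-null by \eqref{eq:null sets of Hausdorff measure and capacity}, so $(3)$ holds $\mathcal H$-a.e.; Theorem \ref{thm:BV zero class characterization} with $\Om_0=X$ then gives $u_0\in\BV(X)$ with $\|Du_0\|(X\setminus\Om)=0$ and $u_0^\vee=0$ $\mathcal H$-a.e.\ on $\partial\Om$; the decomposition \eqref{eq:variation measure decomposition} shows $\|Du_0\|$ is absolutely continuous with density $g_u\ch_\Om\in L^p(X)$ — the Cantor and jump parts vanish on $\Om$ because $u$ is Newtonian and off $\Om$ because the variation there is zero — so $u_0\in N^{1,p}(X)$, and as $u_0=0$ off $\Om$ we get $u=u_0|_\Om\in N_0^{1,p}(\Om)$.
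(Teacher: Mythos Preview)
Your ``shorter alternative route'' is the paper's argument. The paper reduces $\capa_p$-null to $\capa_1$-null (citing \cite[Proposition~2.46]{BB}) and then to $\mathcal H$-null via \eqref{eq:null sets of Hausdorff measure and capacity}, applies Theorem~\ref{thm:BV zero class characterization} to obtain $u_0\in\BV(X)$ with $\Vert Du_0\Vert(X\setminus\Om)=0$, and then observes that from this point one can follow \cite[p.~521]{KKST12} verbatim. Your final step differs only cosmetically --- you argue directly that absolute continuity of $\Vert Du_0\Vert$ with $L^p$ density forces $u_0\in N^{1,p}(X)$, a fact you correctly flag as needing to be granted --- but the substance is identical: the $\liminf$ condition suffices because Theorem~\ref{thm:BV zero class characterization} (whose proof is where the new Federer-type characterization enters) upgrades it to the $\BV$ extension, after which \cite{KKST12} applies unchanged. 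This should be your main proof, not the alternative.

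Your ``primary'' direct-cutoff route, as sketched, has a genuine gap exactly where you locate the obstacle. The bound ``covering total $\sum_j r_j^{-p}\mu(B_j)$ times the supremum of the small averages'' does not close, because nothing in the hypotheses (merely $\Om$ open and bounded) controls that covering total: already for $p=1$ in the plane it is comparable to the length of the portion of $\partial\Om$ being covered, which may be infinite. Replacing the $\lim$ of \cite{KKST12} by a fine cover coming from the $\liminf$ is the right instinct, but it does not by itself supply the missing uniform bound; this is precisely why the paper detours through Theorem~\ref{thm:BV zero class characterization} rather than attempting a direct adaptation of the cutoff construction. You also omit the treatment of the $\capa_p$-null exceptional set, without which your $\eta_\eps u_0$ need not be compactly supported in $\Om$.
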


\begin{proof}
	$(1)\Longleftrightarrow (2)$:
	This is shown in \cite[Theorem 1.1]{KKST12};
	note that the ``strong relative isoperimetric inequality''
	mentioned in the statement of that theorem is proved in \cite[Corollary 5.6]{L-Fedchar}.
	
	$(2)\Rightarrow (3)$:
	This is trivial.
	
	$(3)\Rightarrow (1)$:
	We have $u\in N^{1,p}(\Om)\subset N^{1,1}(\Om)$, since $\Om$ is bounded.
	From the definition of the total variation \eqref{eq:total variation} it follows that
	$N^{1,1}(\Om)\subset \BV(\Om)$, and so we have $u\in\BV(\Om)$.
	By \cite[Proposition 2.46]{BB}, the condition \eqref{eq:zero boundary liminf condition} holds also for
	$\capa_1$-almost every $x\in\partial\Om$.
	Then by \eqref{eq:null sets of Hausdorff measure and capacity},
	it holds for $\mathcal H$-a.e. $x\in\partial\Om$.
	Now by Theorem \ref{thm:BV zero class characterization}, we get
	$u_0\in\BV(X)$ with $\Vert Du_0\Vert(X\setminus\Om)=0$. After this, we can follow
	almost verbatim the proof given in
	\cite[Theorem 1.1]{KKST12} (p. 521).
\end{proof}

\noindent Address:\\

\noindent Academy of Mathematics and Systems Science\\
Chinese Academy of Sciences\\
Beijing 100190, PR China\\
E-mail: {\tt panulahti@amss.ac.cn}

\end{document}